\documentclass[11pt,reqno]{amsart}

\usepackage[T1]{fontenc}
\usepackage[utf8]{inputenc}
\usepackage{mathpazo}
\usepackage[scaled=.95]{helvet}
\usepackage{microtype}
\usepackage[margin=1.10in]{geometry}
\usepackage{amsmath,amssymb,mathtools,mathrsfs}
\usepackage{xcolor}
\usepackage{enumitem}
\usepackage{hyperref}
\usepackage[nameinlink,noabbrev]{cleveref}

\hypersetup{
  colorlinks=true,
  linkcolor=blue!55!black,
  citecolor=blue!55!black,
  urlcolor=blue!55!black,
  pdfauthor={Aelson Sobral, Eduardo V. Teixeira and Jose Miguel Urbano},
  pdftitle={Qualitative-to-quantitative regularity for degenerate equations}
}

\numberwithin{equation}{section}
\theoremstyle{plain}
\newtheorem{theorem}{Theorem}[section]

\newtheorem{lemma}[theorem]{Lemma}
\newtheorem{corollary}[theorem]{Corollary}

\theoremstyle{definition}

\theoremstyle{remark}
\newtheorem{remark}[theorem]{Remark}

\crefname{hypothesis}{hypothesis}{hypotheses}
\Crefname{hypothesis}{Hypothesis}{Hypotheses}

\newcommand{\R}{\mathbb R}
\newcommand{\N}{\mathbb N}
\newcommand{\Rn}{\mathbb R^n}
\newcommand{\Sn}{\mathbb S^{n-1}}
\newcommand{\norm}[1]{\left\lVert #1\right\rVert}
\newcommand{\abs}[1]{\left\lvert #1\right\rvert}
\newcommand{\osc}{\operatorname*{osc}}
\newcommand{\diver}{\operatorname{div}}
\newcommand{\Fcal}{\mathcal F}

\newcommand{\Kcal}{\mathcal K}

\setlist[enumerate]{leftmargin=2.2em,itemsep=4pt,topsep=4pt}

\setlist[itemize]{leftmargin=2em,itemsep=3pt,topsep=4pt}

\title[Qualitative-to-quantitative regularity]{Qualitative-to-quantitative regularity for degenerate equations}

\author[A. Sobral]{Aelson Sobral}
\address{Applied Mathematics and Computational Sciences (AMCS), Computer, Electrical and Mathematical Sciences and Engineering Division (CEMSE), King Abdullah University of Science and Technology (KAUST), Thuwal, 23955-6900, Kingdom of Saudi Arabia}
\email{aelson.sobral@kaust.edu.sa}

\author[E. V. Teixeira]{Eduardo V. Teixeira}
\address{Department of Mathematics, Oklahoma State University, 401 Mathematical
Sciences, Stillwater, OK 74078, USA}
\email{eduardo.teixeira@okstate.edu} 

\author[J.M. Urbano]{Jos\'{e} Miguel Urbano}
\address{Applied Mathematics and Computational Sciences (AMCS), Computer, Electrical and Mathematical Sciences and Engineering Division (CEMSE), King Abdullah University of Science and Technology (KAUST), Thuwal, 23955-6900, Kingdom of Saudi Arabia; and CMUC, Department of Mathematics, University of Coimbra, 3000-143 Coimbra, Portugal}
\email{miguel.urbano@kaust.edu.sa}

\date{}

\subjclass[2020]{Primary 35B65; Secondary 35J92, 35J94, 35R11, 35D40}

\keywords{Qualitative-to-quantitative regularity, $p$-Laplacian, infinity-Laplacian, nonlocal super\-conductivity}

\begin{document}

\begin{abstract}
We establish a principle that converts qualitative regularity into 
uniform interior estimates for degenerate equations. For the
$p$-Laplace equation, $p>2$, we prove that local $C^{1,\gamma}$
regularity of every solution, without any uniform bound on the
corresponding seminorms, implies uniform $C^{1,\beta}$ estimates
for every $0<\beta<\gamma$. Thus, the optimal qualitative and
quantitative gradient H\"older exponents coincide as suprema in
every dimension. In the plane, this yields estimates controlled
by the $L^p$ norm of the solution for every exponent below the
sharp Iwaniec--Manfredi threshold. Applied to Xu's recent qualitative $C^{1,1/3}$ regularity for planar infinity-harmonic functions, we obtain uniform interior
$C^{1,\beta}$ estimates for every $0<\beta<1/3$. For the nonlocal superconductivity
equation $|Du|(-\Delta)^s u=0$, with $1/2<s<1$, we prove that
pointwise differentiability of all solutions with integrable
tails implies uniform interior $C^{1,\alpha}$ estimates for
bounded solutions, for some $\alpha>0$. The proofs combine compactness
for shifted equations with an improvement-of-flatness iteration
at variable scales. 
\end{abstract}

\maketitle

\section{Introduction and main results}

We study the relation between qualitative regularity and uniform interior estimates for solutions of degenerate PDEs, in particular, for the homogeneous $p$-Laplace equation
\[
-\Delta_pu\coloneqq-\diver\bigl(\abs{Du}^{p-2}Du\bigr)=0,\qquad p>2.
\]
The question is whether knowing that every solution belongs locally to $C^{1,\gamma}$, without a common norm bound, suffices to obtain a quantitative estimate for every exponent below $\gamma$. Our main result gives an affirmative answer to this question. In the plane, this yields uniform H\"older gradient estimates for every exponent strictly below the sharp exponent of Iwaniec and Manfredi, which are key in the proof of the $C^{p^{\prime}}$-regularity conjecture.

Interior $C^{1,\alpha}$ regularity for $p$-harmonic functions is classical. The works of Uhlenbeck, Evans, DiBenedetto, Lewis, and Tolksdorf give quantitative estimates for some $\alpha=\alpha(n,p)>0$; see \cite{Uhlenbeck1977, Evans1982, DiBenedetto1983, Lewis1983, Tolksdorf1984}. The precise optimal exponent is unknown in general dimensions. In the plane, Iwaniec and Manfredi \cite{IwaniecManfredi1989} proved that every $p$-harmonic function belongs locally to $C^{1,\alpha_{\mathrm{IM}}(p)}$, where
\begin{equation}\label{eq:intro-IM-exponent}
\alpha_{\mathrm{IM}}(p)\coloneqq\frac16\left(\frac p{p-1}+\sqrt{1+\frac{14}{p-1}+\frac1{(p-1)^2}}\right),
\end{equation}
and showed that this exponent is sharp. Their proof uses the quasiregularity of the complex gradient and a hodograph transformation near critical points, and is not adaptable to higher dimensions.

The passage from this qualitative statement to a uniform estimate requires an additional argument. As observed by Ara\'ujo, Teixeira, and Urbano \cite[Section~2]{AraujoTeixeiraUrbano2017}, a universal bound for the $C^{1,\alpha_{\mathrm{IM}}(p)}$ norm is not derived in \cite{IwaniecManfredi1989}, and following the constants through the hodograph inversion is a delicate issue. Applying the estimates of Baernstein and Kovalev \cite{BaernsteinKovalev2005} to the complex gradient instead gives quantitative regularity with exponent
\begin{equation*}
\alpha_{\mathrm{BK}}(p)\coloneqq\frac1{2p}\left(-3-\frac1{p-1}+\sqrt{33+\frac{30}{p-1}+\frac1{(p-1)^2}}\right),
\end{equation*}
which satisfies
\begin{equation*}
\frac1{p-1}<\alpha_{\mathrm{BK}}(p)<\alpha_{\mathrm{IM}}(p).
\end{equation*}
This quantitative improvement over $1/(p-1)$ is a crucial ingredient in the proof of the planar $C^{p^{\prime}}$-regularity conjecture of \cite{AraujoTeixeiraUrbano2017}. The same authors established in \cite{AraujoTeixeiraUrbano2018} the $C^{p^{\prime}}$-regularity conjecture in higher dimensions for several special scenarios. The distinction between the qualitative and quantitative exponents also appears in the higher-order estimates of Balci, Diening, and Weimar \cite[Lemma~2.19 and Remark~2.20]{BalciDieningWeimar2020}.

More recently, Lee and Lee \cite{LeeLee2026} obtained quantitative gradient estimates beyond $\alpha_{\mathrm{BK}}(p)$ when $p>2$ is sufficiently close to $2$, and proved the $C^{p^{\prime}}$-regularity conjecture in every dimension in that range. The planar counterpart of our main result covers the full range $0<\beta<\alpha_{\mathrm{IM}}(p)$ for each fixed $p>2$. It uses the Iwaniec--Manfredi theorem only as qualitative input and does not require tracking the constants in its proof.

\subsection{Definitions and main result}

We assume $n\ge2$ and consider the $p$-Laplace equation in the degenerate case $p>2$. A function $u\in W^{1,p}_{\mathrm{loc}}(\Omega)$ is called $p$-harmonic in an open, bounded set $\Omega \subset \Rn$ if
\[
\int_\Omega \abs{Du}^{p-2}Du\cdot D\varphi\,dx=0, \qquad\text{for every }\varphi\in C_c^\infty(\Omega).
\]
We write $[Du]_{C^{0,\beta}(U)}$ for the H\"older seminorm of the gradient on $U$. Our main result converts qualitative gradient regularity into a uniform interior estimate.

\begin{theorem}\label{thm:intro-principle}
Let $\gamma\in(0,1]$, and suppose that every $p$-harmonic function in every ball of $\Rn$ belongs locally to $C^{1,\gamma}$. Then, for every $\beta\in(0,\gamma)$, there is $C=C(n,p,\gamma,\beta)<\infty$ such that every $p$-harmonic function $u$ in $B_1$ satisfies
\begin{equation}\label{eq:intro-main-estimate}
\norm{Du}_{L^\infty(B_{1/2})}+[Du]_{C^{0,\beta}(B_{1/2})}
\le C\norm{u}_{L^\infty(B_{3/4})}.
\end{equation}
The constant $C$ is independent of $u$.
\end{theorem}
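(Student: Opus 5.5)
The plan is to prove an improvement-of-flatness statement at variable scales, with a compactness argument supplying the key step. By scaling (the equation is invariant under $u\mapsto \lambda u(x_0+rx)$), it suffices to prove the following. There is $C$ such that every $p$-harmonic $u$ in $B_1$ with $\norm{u}_{L^\infty(B_{3/4})}\le1$ satisfies, for every $x_0\in B_{1/2}$ and every $r\in(0,1/8)$, an affine approximation
\[
\sup_{B_r(x_0)}\abs{u-\ell_{x_0,r}}\le C r^{1+\beta}.
\]
A Campanato-type characterization then yields \eqref{eq:intro-main-estimate}, with the $L^\infty$ bound on $Du$ coming from classical $C^{1,\alpha}$ theory, or from the same iteration. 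I will take the standard $C^{1,\alpha_0}$ estimates (Uhlenbeck, DiBenedetto, Tolksdorf) for some $\alpha_0(n,p)>0$ as known. These provide uniform Lipschitz bounds and compactness.

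\emph{Key lemma.} Fix $\beta<\beta'<\gamma$. I claim there is $\rho\in(0,1/4)$, depending only on $(n,p,\gamma,\beta)$, such that whenever $v$ is $p$-harmonic in $B_1$ with $\norm{v}_{L^\infty(B_1)}\le1$, there is an affine $\ell$ with $\abs{D\ell}\le C_0$ and $\sup_{B_\rho}\abs{v-\ell}\le\rho^{1+\beta'}$. The lemma must also hold for the shifted equations. If $u$ is close to an affine function $\ell_0$ with slope $q$, then $w=(u-\ell_0)/\varepsilon$ solves
\[
\diver\bigl(\abs{q+\varepsilon Dw}^{p-2}(q+\varepsilon Dw)\bigr)=0,
\]
with $\varepsilon\to0$ along the iteration. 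So the lemma must be stated uniformly in $q\in\R^n$ and in $\varepsilon\in(0,1]$. I will prove it by contradiction. Take a sequence of counterexamples $(v_k,q_k,\varepsilon_k)$, normalize, and use uniform $C^{1,\alpha_0}$ estimates for shifted equations to pass to a limit. Three cases arise.

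\emph{Case $\varepsilon_k\abs{q_k}$ bounded and $\varepsilon_k\not\to0$.} After renormalizing, the limit $\varepsilon q+\varepsilon v$ is $p$-harmonic.

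\emph{Case $\varepsilon_k\abs{q_k}\to\infty$.} The equation becomes uniformly elliptic in the limit, and the limit solves a constant-coefficient linear equation. Its solutions are smooth, with universal estimates.

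\emph{Case $\varepsilon_k\to0$ with $\abs{q_k}$ bounded.} The limit is either $p$-harmonic up to an affine term, or linear.

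In every case the limit $v_\infty$ belongs locally to $C^{1,\gamma}$, by the qualitative hypothesis or by linear theory. Hence $\sup_{B_\rho}\abs{v_\infty-\ell}=o(\rho^{1+\beta'})$ for the tangent plane $\ell$ of $v_\infty$ at $0$.

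This is where the main obstacle lies. Qualitative $C^{1,\gamma}$ gives no uniform rate, so a single $\rho$ cannot be chosen before taking the limit. I plan to get around this with a \emph{variable scale} formulation. Instead of a fixed $\rho$, the lemma asserts that for each $v$ there exists some scale $\rho_v\in[\rho_*,1/4]$ at which flatness improves by the factor $\rho_v^{1+\beta'}$. The lower bound $\rho_*$ is universal. It is obtained by contradiction: if $\rho_*$ degenerated along a sequence, the limit would fail to be $C^{1,\beta''}$ at the origin for some $\beta''\in(\beta',\gamma)$. That is excluded because the limit lies in $C^{1,\gamma}$ and $\beta''<\gamma$. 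The room between $\beta$ and $\gamma$ absorbs the constant lost in the $\ell$-approximation.

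Finally I iterate the variable-scale lemma. Set $r_{k+1}=\rho_k r_k$ with $\rho_k\in[\rho_*,1/4]$, and
\[
\sup_{B_{r_k}}\abs{u-\ell_k}\le r_k^{1+\beta'}.
\]
Each rescaled difference $(u-\ell_k)(r_k\cdot)/r_k^{1+\beta'}$ solves a shifted equation with $q=D\ell_k/r_k^{\beta'}$ and $\varepsilon=1$. This is covered by the uniformity in $q$. The slopes satisfy $\abs{D\ell_{k+1}-D\ell_k}\le Cr_k^{\beta'}$, and since $r_k\le4^{-k}$ this series converges geometrically. For an arbitrary $r$ with $r_{k+1}\le r<r_k$, the bound $\rho_k\ge\rho_*$ gives $r_k\le r/\rho_*$. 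This yields the estimate with exponent $\beta'>\beta$, hence the Campanato estimate with a constant depending only on $(n,p,\gamma,\beta)$. The most delicate part is the uniform-in-$q$ compactness in the lemma, which needs uniform $C^{1,\alpha_0}$ bounds for the shifted equation. I expect to obtain these from the known estimates applied to $\ell_0+\varepsilon w$, which is itself $p$-harmonic.
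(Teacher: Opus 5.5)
Your architecture is essentially the paper's. It has a variable-scale flatness lemma obtained by compactness; your contradiction argument for a universal lower bound $\rho_*$ is equivalent to the paper's finite subcover by the sets $\mathcal O_h$. It has the same two limiting regimes, and an iteration in which the minimal scale controls intermediate radii. The genuine gap is the step you flag as most delicate, compactness uniform in the shift, and the route you suggest for it fails. Applying the classical estimate to the $p$-harmonic function $u=q\cdot x+w$, with $\norm{w}_{L^\infty(B_1)}\le1$, only yields
\[
\norm{Dw}_{L^\infty(B_{1/2})}+[Dw]_{C^{0,\alpha_0}(B_{1/2})}\le C\norm{u}_{L^\infty(B_1)}+\abs q\le C(1+\abs q).
\]
This bound degenerates as $\abs q\to\infty$, which is precisely the regime your iteration generates: $q_k=D\ell_k/r_k^{\beta'}\to\infty$ whenever $Du(0)\neq0$. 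Without a $q$-independent modulus of continuity, your sequence of counterexamples need not have a convergent subsequence. So neither the lower bound $\rho_*$ nor the linear limit in the unbounded-shift case is justified. (Incidentally, by $(p-1)$-homogeneity the effective shift in $\diver\bigl(\abs{q+\varepsilon Dw}^{p-2}(q+\varepsilon Dw)\bigr)=0$ is $q/\varepsilon$, not $\varepsilon q$. Your three cases are therefore mis-indexed and not exhaustive. Since the iteration only uses $\varepsilon=1$, simply drop $\varepsilon$ and split according to whether $q_k$ stays bounded.)

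The missing observation is that uniform $C^{1,\alpha_0}$ bounds are not needed; a $q$-independent H\"older bound on $w$ suffices. Failure of flatness at every scale in $[1/k,1/4]$ passes to uniform limits, as do stability of $p$-harmonic functions and viscosity stability under $\Fcal_{q_j}\to-\operatorname{tr}\bigl((I+(p-2)e\otimes e)X\bigr)$. The paper obtains such a bound as follows. Every normalized shifted solution satisfies $-\abs{Dv+q}^{p-2}\mathcal M^+(D^2v)\le0$ and $-\abs{Dv+q}^{p-2}\mathcal M^-(D^2v)\ge0$ in the full viscosity sense. The Imbert--Silvestre cutting lemma removes the degenerate factor, and Krylov--Safonov applies in the resulting Pucci class with constants $1$ and $p-1$.

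Alternatively, you can stay with your own ingredients and split by the size of $q$. For $\abs q\le Q_0$ your bound is already uniform. For $\abs q\ge Q_0$, apply the $C^{1,\alpha_0}$ estimate to $u/\abs q$ and interpolate against $\norm{u/\abs q-e\cdot x}_{L^\infty(B_1)}\le1/\abs q$, where $e=q/\abs q$. This gives $\abs{Du-q}\le C\abs q^{1/(1+\alpha_0)}\le\abs q/2$ in $B_{1/2}$. Then $u$ is smooth there, and $w$ solves $\operatorname{tr}(A(x)D^2w)=0$ with $I\le A\le(p-1)I$. Krylov--Safonov again gives a H\"older bound independent of $q$. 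With either fix in place, the rest of your argument goes through.
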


The main application is the following planar estimate, obtained by taking $\gamma=\alpha_{\mathrm{IM}}(p)$.

\begin{corollary}\label{cor:intro-planar}
Let $u$ be $p$-harmonic in $B_1 \subset \R^2$. For every $0<\beta<\alpha_{\mathrm{IM}}(p)$, there is $C=C(p,\beta)<\infty$ such that
\begin{equation}\label{eq:intro-planar-Lp}
\norm{u}_{C^{1,\beta}(B_{1/2})}\le C(p,\beta)\norm{u}_{L^p(B_1)}.
\end{equation}
\end{corollary}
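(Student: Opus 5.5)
The corollary follows by combining \Cref{thm:intro-principle} with the Iwaniec--Manfredi theorem and two classical local estimates for $p$-harmonic functions. First, the hypothesis of \Cref{thm:intro-principle} holds with $n=2$ and $\gamma=\alpha_{\mathrm{IM}}(p)$. By \cite{IwaniecManfredi1989}, every $p$-harmonic function in a planar ball is locally of class $C^{1,\alpha_{\mathrm{IM}}(p)}$. This is exactly the qualitative input required, and no constants are tracked. One should check that $\alpha_{\mathrm{IM}}(p)\le 1$, so that $\gamma\in(0,1]$. If $\alpha_{\mathrm{IM}}(p)$ exceeded $1$ for some range of $p$, we would simply apply the theorem with $\gamma=1$, since local $C^{1,\alpha}$ implies local $C^{1,1}$. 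Either way, every $\beta<\alpha_{\mathrm{IM}}(p)$ with $\beta<1$ is covered.

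Fix $\beta\in(0,\alpha_{\mathrm{IM}}(p))$. \Cref{thm:intro-principle} then gives
\[
\norm{Du}_{L^\infty(B_{1/2})}+[Du]_{C^{0,\beta}(B_{1/2})}\le C(p,\beta)\norm{u}_{L^\infty(B_{3/4})}.
\]

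Next we control $\norm{u}_{L^\infty(B_{3/4})}$ by $\norm{u}_{L^p(B_1)}$. Solutions of $-\Delta_p u=0$ are quasi-minimizers, so $u^\pm$ are subsolutions, and the classical De Giorgi--Moser local boundedness estimate (Serrin) applies. Covering $B_{3/4}$ by balls $B_{1/8}(x_0)$ with $B_{1/4}(x_0)\subset B_1$, we get
\[
\sup_{B_{1/8}(x_0)}\abs{u}\le C(p)\Bigl(\fint_{B_{1/4}(x_0)}\abs{u}^p\,dx\Bigr)^{1/p}\le C(p)\norm{u}_{L^p(B_1)}.
\]
This bound also supplies $\norm{u}_{L^\infty(B_{1/2})}$, which is part of the full $C^{1,\beta}$ norm.

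Finally, the full norm $\norm{u}_{C^{1,\beta}(B_{1/2})}$ is the sum of $\norm{u}_{L^\infty}$, $\norm{Du}_{L^\infty}$ and $[Du]_{C^{0,\beta}}$ on $B_{1/2}$. Each term has now been bounded by $C(p,\beta)\norm{u}_{L^p(B_1)}$. If one's convention also includes the Hölder seminorm of $u$ itself, it is dominated by $\norm{Du}_{L^\infty}$ on the convex set $B_{1/2}$.

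There is no real obstacle here beyond verifying that the qualitative hypothesis is met verbatim: it must hold in every ball of $\R^2$, not merely in $B_1$. This is immediate, because the Iwaniec--Manfredi result is local and invariant under translation and dilation.
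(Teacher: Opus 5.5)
Your proposal is correct and matches the paper's proof. Both apply \Cref{thm:intro-principle} with $n=2$ and $\gamma=\alpha_{\mathrm{IM}}(p)$, then bound $\norm{u}_{L^\infty(B_{3/4})}$, and hence also $\norm{u}_{L^\infty(B_{1/2})}$, by $C(p)\norm{u}_{L^p(B_1)}$ via local boundedness.

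Your hedge about $\gamma\le1$ is unnecessary. Indeed $\alpha_{\mathrm{IM}}(2)=1$, and $\alpha_{\mathrm{IM}}(p)$ is strictly decreasing in $p$, tending to $1/3$, so $\alpha_{\mathrm{IM}}(p)\in(1/3,1)$ for every $p>2$.
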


Together with the sharpness of the Iwaniec--Manfredi exponent, this identifies the optimal quantitative planar exponent as a supremum. The argument does not give the endpoint estimate at $\beta=\alpha_{\mathrm{IM}}(p)$. The constants involved are not explicit and may deteriorate as $\beta$ approaches the endpoint.

The same principle shows that the optimal qualitative and quantitative $p$-harmonic exponents agree as suprema in every dimension; see \Cref{cor:thresholds}. We state the estimates on the unit ball; their counterparts on arbitrary balls follow by translation and scaling.

\subsection{Contribution and proof strategy}

Our proof adapts the qualitative-to-quantitative compactness
method of Sobral and Teixeira \cite{ST}, developed for compact
families of uniformly elliptic equations preserved by their
natural rescalings. For the $p$-Laplace equation, affine subtraction generates a family of shifted equations, and the
argument must account for limits in which the shifts become
unbounded; see \cite{AttouchiParviainenRuosteenoja2017} for the
shifted framework. Uniform local H\"older estimates, independent
of the shift, provide compactness for normalized solutions.

After passing to a subsequence, either the shifts converge, or their lengths tend to infinity. In the first case, the limit is $p$-harmonic after adding an affine function. In the second case, the directions of the shifts converge to a unit vector $e$, and the limit solves a linear uniformly elliptic equation with the constant-coefficients matrix $I + (p-2)e\otimes e$. The first limit has the assumed qualitative regularity, while the second is smooth. Thus, every limiting function $h$ satisfies
\[
\norm{h-h(0)-Dh(0)\cdot x}_{L^\infty(B_r)}\le L_h r^{1+\gamma}
\]
for sufficiently small $r$, where $L_h$ may depend on $h$.

Fix $0<\beta<\gamma$. The factor $r^{\gamma-\beta}$ allows us to absorb $L_h$ by choosing a sufficiently small radius for each limiting function. The resulting strict affine approximation persists under small uniform perturbations. These neighborhoods cover the compact family of limits, so a finite subcovering provides finitely many admissible radii and affine slopes, independent of the solution and the shift.

At each step of the iteration, we choose one of these radii and subtract the corresponding affine approximation. The positive minimum of the selected radii allows estimates at the resulting scales to control all intermediate radii. The affine slopes converge, giving a uniform first-order expansion with remainder of order $1+\beta$. Applying the construction at every interior point and comparing the resulting expansions yields the uniform gradient estimate.

\subsection{Further main results}

The mechanism developed here yields two further principal contributions. Both rest on the same conceptual foundation, but each requires different adaptations of the compactness and rescaling argument to the structure of the pertaining equation. For the infinity-Laplace equation
\begin{equation*}
	\Delta_\infty u\coloneqq\langle D^2u\,Du,Du\rangle=0,
\end{equation*}
the planar estimates of Evans and Savin \cite{EvansSavin2008} provide $C^1$ compactness. Combining this compactness with Xu's qualitative $C^{1,1/3}$ regularity theorem and quantitative estimate near affine functions \cite[Theorems~1.1 and~8.2]{Xu2026} gives the following uniform interior estimate.

\begin{theorem}\label{thm:planar-critical-almost-endpoint}
For every $\beta\in(0,1/3)$ there is $C_\beta<\infty$ such that every infinity-harmonic function $u$ in $B_1\subset\R^2$ satisfies
\begin{equation}\label{eq:planar-uniform-gradient}
\norm{Du}_{L^\infty(B_{1/2})}+[Du]_{C^{0,\beta}(B_{1/2})}
\le C_\beta\osc_{B_{3/4}}u.
\end{equation}
The constant depends only on $\beta$.
\end{theorem}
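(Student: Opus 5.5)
We follow the same compactness and improvement-of-flatness scheme described for \Cref{thm:intro-principle}, now for the infinity-Laplacian in the plane. By translation, scaling and subtracting constants, it suffices to fix $x_0\in B_{1/2}$ and $\beta\in(0,1/3)$ and show the following: if $u$ is infinity-harmonic in $B_{1/4}(x_0)$ with $\osc u\le1$, then there is a slope $\ell$ with $\abs{\ell}\le C$ and
\[
\abs{u(x)-u(x_0)-\ell\cdot(x-x_0)}\le C\abs{x-x_0}^{1+\beta}.
\]
The key structural point is that the class is invariant under the relevant operations. Subtracting an affine function from an infinity-harmonic function does not give an infinity-harmonic function. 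Instead, $v=u-\ell\cdot x$ solves the shifted equation $\langle D^2v\,(Dv+\ell),(Dv+\ell)\rangle=0$. So the family to work with consists of viscosity solutions of these shifted equations, with $\ell\in\R^2$ arbitrary, normalized by $\osc\le1$. This family is closed under the rescalings $v_r(x)=v(rx)/r^{1+\beta}$, provided we also rescale the shift to $\ell/r^{\beta}$.

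\textbf{Compactness.} For bounded shifts, $v+\ell\cdot x$ is infinity-harmonic. The Evans--Savin estimates then give a uniform $C^{1,\alpha_0}$ bound for $v$, modulo the bounded affine part, and hence compactness in $C^1_{\mathrm{loc}}$. The case of unbounded shifts needs separate treatment. I would first try to get uniform Lipschitz (indeed $C^{1,\alpha_0}$) bounds independent of $\ell$ directly from Evans--Savin applied to $v+\ell\cdot x$, since that function is infinity-harmonic. The difficulty is that its oscillation is of order $\abs{\ell}$, so the estimate does not directly control $v$. For large $\abs{\ell}$ I would therefore invoke Xu's quantitative estimate near affine functions \cite[Theorem~8.2]{Xu2026}. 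Normalizing by $\abs{\ell}$, $(v+\ell\cdot x)/\abs{\ell}$ is close to the linear function $e\cdot x$, and Xu's result gives a $C^{1,\alpha}$ estimate of the deviation in terms of its size. Alternatively, stability of viscosity solutions shows directly that limits solve $\partial_{ee}h=0$. Either way, limits with $\abs{\ell_k}\to\infty$ solve $\langle D^2h\,e,e\rangle=0$ with $e=\lim\ell_k/\abs{\ell_k}$. We need that limit $h$ is $C^{1,\gamma}$ at the origin with $\gamma>\beta$. I expect this to be the main obstacle, because $\partial_{ee}h=0$ is degenerate and does not by itself give regularity in the transversal direction. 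My plan is to take the regularity from Xu's estimate itself, which gives quantitative $C^{1,1/3}$-type control of the normalized deviation that passes to the limit. For bounded-shift limits, $h+\ell\cdot x$ is infinity-harmonic, hence $C^{1,1/3}$ at $0$ by \cite[Theorem~1.1]{Xu2026}.

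\textbf{Covering and iteration.} For each limit $h$ in the compact family $\Kcal$, choose $r_h$ small so that $L_hr_h^{1/3-\beta}\le1/4$. Then
\[
\norm{h-h(0)-Dh(0)\cdot x}_{L^\infty(B_{r_h})}\le\tfrac14 r_h^{1+\beta}.
\]
By $C^0$ closeness this persists with constant $1/2$ on a neighborhood of $h$. A finite subcover of $\Kcal$ yields radii $r_1,\dots,r_N\ge r_*>0$ and slopes bounded by a uniform constant, independent of the solution and the shift. We then iterate. At step $k$, the rescaled function lies in the family, so we pick an admissible radius, subtract the affine approximation, rescale, and update the shift. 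The accumulated slopes form a convergent geometric series with ratio controlled by $r_*^{\beta}$. This gives the pointwise $C^{1,\beta}$ expansion at $x_0$ with constants uniform in $x_0$ and $u$. Intermediate radii are handled by the lower bound $r_*$.

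\textbf{Conclusion.} Comparing the expansions at two points $x,y$ at scale $\abs{x-y}$ gives $\abs{\ell_x-\ell_y}\le C\abs{x-y}^{\beta}$ in the standard way, with $\ell_x=Du(x)$. This yields \eqref{eq:planar-uniform-gradient}, with the $L^\infty$ bound on $Du$ coming from the first step.
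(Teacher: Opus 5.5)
\textbf{There is a genuine gap.} It sits exactly where you flagged the ``main obstacle'', and it cannot be repaired inside your scheme.

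\emph{The unbounded-shift regime is unavoidable.} At any $x_0$ with $Du(x_0)\neq0$, your rescaled shifts $R_k^{-\beta}p_k$ necessarily tend to infinity.

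\emph{In that regime the normalized shifted class is neither compact nor regular.} Fix a smooth $a$ on $\R$, take $K>2\sup\abs{a''(a')^2}$, and take $\epsilon>0$ with $4\epsilon^3K\le1/2$. The functions $W^\pm=x_1+\epsilon a(x_2)\pm\epsilon^3K(4-x_1^2)$ satisfy $\Delta_\infty W^+<0<\Delta_\infty W^-$ in $Q_2$. Comparison with these smooth strict barriers shows that the infinity-harmonic $w$ in $Q_2$ with boundary values $x_1+\epsilon a(x_2)$ obeys $\abs{w-x_1-\epsilon a(x_2)}\le4K\epsilon^3$. Hence $v=(w-x_1)/\epsilon$ solves your shifted equation with $\ell=e_1/\epsilon$, and $\abs{v-a(x_2)}\le4K\epsilon^2$.
\begin{itemize}
\item Taking $a_j=\frac12\sin(jx_2)$ and $\epsilon_j=j^{-3}$ destroys equicontinuity, so the family is not compact.
\item Taking smooth $a_j\to\frac14\abs{x_2}$ with $\epsilon_j=1/j$ yields the limit $h=\frac14\abs{x_2}$. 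It solves $\partial_{11}h=0$, but $\inf_b\norm{h-b\cdot x}_{L^\infty(B_r)}\ge r/4$ for every $r$. So your one-step flatness lemma fails at every radius.
\end{itemize}
The same example shows why Xu's near-affine estimate cannot supply the missing regularity. It is not linear in the deviation: applied to $(v+\ell\cdot x)/\abs{\ell}$ it only gives $[Dv]_{C^{0,1/3}}\le M_*\abs{\ell}$, which degenerates. Your hoped-for ``quantitative control of the normalized deviation that passes to the limit'' does not exist.

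\textbf{The missing idea is to never subtract a nonconstant affine function and to use a stopping time instead.}
\begin{itemize}
\item The paper rescales only as $v(R_kx)/R_k^{1+\beta}$. This keeps the class $\mathcal N$ invariant and compact in $C^1$ by Evans--Savin.
\item It iterates the improvement of flatness only while the normalized central gradient $R_k^{-\beta}\abs{Dv(0)}$ stays below a threshold $\eta$. That threshold comes from $C^1$-compactness of the zero-gradient limits $\mathcal Z_\infty$, which is the only place Xu's qualitative $C^{1,1/3}$ theorem enters.
\item When the normalized gradient first reaches $\eta$, the iteration stops. The gradient then lies between $\eta$ and $C_0$, so after rotating and dividing by it, Xu's near-affine estimate applies. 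It gives a $C^{1,1/3}$ bound at all scales below the stopping scale $R_*$.
\item The factor $(r/R_*)^{1/3-\beta}\le1$ makes the constants independent of $R_*$.
\end{itemize}
Your final two-point comparison of tangent planes matches the paper. The iteration feeding it, however, must be built on this stopping-time structure rather than on the shifted family.
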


The iteration uses only subtraction of constants and rescaling, while the normalized central gradient remains small. Once this gradient reaches a fixed positive threshold, the near-affine estimate controls all smaller scales. Together with Aronsson's example \cite{Aronsson1984}, the theorem identifies the optimal quantitative exponent as a supremum. 

The last application concerns the nonlocal equation
\begin{equation}\label{eq:scalar-nonlocal}
\abs{Du}(-\Delta)^su=0\quad\text{in }B_1,\qquad \frac12<s<1,
\end{equation}
associated with the superconductivity problem studied in \cite{AraujoSobral2026}. To formulate the qualitative assumption, we use the space
\begin{equation}\label{eq:tail-space}
L_s^1(\Rn)\coloneqq\left\{v\in L^1_{\mathrm{loc}}(\Rn):\norm v_{L_s^1(\Rn)}\coloneqq\int_{\Rn}\frac{\abs{v(y)}}{1+\abs{y}^{n+2s}}\,dy<\infty\right\}.
\end{equation}

\begin{theorem}\label{thm:nonlocal-conditional}
Let $s\in(1/2,1)$, and suppose that every viscosity solution $v\in C(B)\cap L_s^1(\Rn)$ of $\abs{Dv}(-\Delta)^sv=0$ in a ball $B$ is Fr\'echet differentiable at every point of $B$. Then, there are $\alpha=\alpha(n,s)\in(0,2s-1)$ and $C=C(n,s)<\infty$ such that every viscosity solution $u\in L^\infty(\Rn)\cap C(B_1)$ of \eqref{eq:scalar-nonlocal} satisfies
\begin{equation}\label{eq:nonlocal-conditional-estimate}
\norm{Du}_{L^\infty(B_{1/2})}+[Du]_{C^{0,\alpha}(B_{1/2})}\le C\norm u_{L^\infty(\Rn)}.
\end{equation}
The constants are independent of $u$.
\end{theorem}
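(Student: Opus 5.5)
We prove \Cref{thm:nonlocal-conditional} by a compactness and improvement-of-flatness argument. Qualitative differentiability is the only structural input; uniform H\"older estimates supply compactness.

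\emph{Compactness.} Normalize $\norm u_{L^\infty(\Rn)}\le1$. The equation $\abs{Du}(-\Delta)^su=0$ is invariant under adding constants and under the rescaling $u_r(x)=r^{-1-\alpha}(u(x_0+rx)-u(x_0)-\ell\cdot x)$ when $\ell=0$. Adding a linear function, however, changes the equation to $\abs{Dv+\ell}(-\Delta)^s v=0$, because $(-\Delta)^s$ annihilates affine functions.

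We first need uniform interior $C^{0,\tau}$ (in fact $C^{1,\tau}$-type in the small-slope regime) estimates for solutions of the shifted equations $\abs{Dv+\ell}(-\Delta)^sv=0$ with bounded tails in $L^1_s$. Since $s>1/2$, for large $\abs\ell$ this is a uniformly nondegenerate problem, equivalent to $(-\Delta)^sv=0$, whose solutions are smooth. For bounded $\ell$ we use the known Hölder estimates for degenerate nonlocal equations of this type, as in \cite{AraujoSobral2026}.

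Limits of normalized sequences then fall into two classes. If the shifts $\ell_k$ stay bounded, the limit $h$ is such that $h+\ell\cdot x$ solves \eqref{eq:scalar-nonlocal}; here stability of viscosity solutions and the $L^1_s$ convergence of tails are needed. If $\abs{\ell_k}\to\infty$, the limit is $s$-harmonic, hence smooth.

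\emph{Per-limit improvement.} By the hypothesis, every limit $h$ is Fréchet differentiable at $0$. This gives only $o(r)$ control, not $r^{1+\gamma}$. We therefore fix a target: choose $\alpha<2s-1$ and $\lambda$ small, and aim for
\[
\sup_{B_\rho}\abs{h-h(0)-Dh(0)\cdot x}\le \tfrac12\rho^{1+\alpha}\quad\text{for some }\rho\in[\rho_{\min},\lambda].
\]
Differentiability yields $\sup_{B_\rho}\abs{h-h(0)-Dh(0)\cdot x}\le \varepsilon_h(\rho)\rho$ with $\varepsilon_h(\rho)\to0$. This does not by itself beat $\rho^{1+\alpha}$.

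To get a power gain, we apply differentiability to the whole compact class rather than to a single function. Consider the scale-invariant normalized family: solutions bounded by $1$ on $B_1$ with tail growth controlled by $\abs y^{1+\alpha}$ outside, with $\alpha<2s-1$, so that the tails remain in $L^1_s$. By compactness, differentiability upgrades to a uniform modulus: there exists $\rho_0$ such that every member is within $\frac14\rho_0^{1+\alpha}$ of an affine function on $B_{\rho_0}$.

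We prove this by contradiction. Suppose the uniform statement fails along a sequence. Blowing up the sequence then produces a limit element of the class which is not differentiable at $0$, contradicting the hypothesis. The limit stays in the class because the growth condition is preserved under rescaling. This forces the standard step: we must show that differentiability of every member of a scaling-invariant compact class implies a uniform power rate. We use a finite-cover argument: each limit admits a radius and slope, the property persists under small uniform perturbation, and compactness extracts finitely many radii.

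\emph{Iteration.} From the finite cover we get radii $\rho_i\ge\rho_{\min}$ and slopes. Iterating, subtracting affine functions and rescaling as in the $p$-Laplace case, the shifts accumulate. That is exactly why the shifted family, and the alternative where $\abs\ell\to\infty$ gives $s$-harmonic limits, must be included. We need $\alpha<2s-1$ so that the rescaled tails $r^{-1-\alpha}u(rx)$ stay bounded in $L^1_s$ uniformly along the iteration; this is where the restriction $\alpha\in(0,2s-1)$ in the statement enters.

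The slopes form a Cauchy sequence, giving $\abs{u(x)-u(x_0)-\ell_{x_0}\cdot(x-x_0)}\le C\abs{x-x_0}^{1+\alpha}$ at every $x_0\in B_{1/2}$. The standard comparison of expansions at nearby points then yields \eqref{eq:nonlocal-conditional-estimate}.

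\emph{Main obstacle.} The main obstacle is the tail bookkeeping. After subtracting affine functions and rescaling, the tail grows like $\abs y^{1+\alpha}$, and we must show that viscosity stability and the uniform Hölder compactness hold uniformly for such tails and for arbitrary shifts $\ell$.
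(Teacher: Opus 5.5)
There is a genuine gap, and it sits in your per-limit improvement step. You fix $\alpha<2s-1$ in advance. You then claim that compactness forces every member of the class to lie within $\frac14\rho^{1+\alpha}$ of an affine function at some radius from a finite list, because a failing sequence would produce a limit that is not differentiable at $0$. That contradiction never materializes. A limit $h$ with $\inf_b\norm{h-h(0)-b\cdot x}_{L^\infty(B_\rho)}\ge\frac14\rho^{1+\alpha}$ for all small $\rho$ can still be differentiable at $0$; take a modulus like $\rho/\abs{\log\rho}$. Fr\'echet differentiability is an $o(\rho)$ statement. On a compact family it can only beat a threshold that is a fixed fraction of the \emph{linear} scale, never a prescribed power $\rho^{1+\alpha}$. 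If your step worked, it would give estimates for every $\alpha<2s-1$, far more than the theorem asserts.

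The missing idea is to make the flatness target linear, with a fixed gain:
\begin{itemize}
\item Each limit profile $h$ satisfies $\norm{h-Dh(0)\cdot x}_{L^\infty(B_{K\rho_h})}<\rho_h/8$ for some small $\rho_h$.
\item A finite subcover gives radii $r\in[\theta,\rho_*]$ that work for all shifted solutions.
\item Iterating with normalization $8^k/R_k$ gives $\norm{v-P_k}_{L^\infty(B_{R_k})}\le 8^{-k}R_k$ with $R_k\ge\theta^k$.
\item Only then is $\alpha$ chosen, via $\theta^\alpha\ge 1/8$, so that $8^{-k}\le R_k^\alpha$.
\end{itemize}
So the exponent is an output of the compactness constant $\theta$, not an input.

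The tail bookkeeping you flag is also not resolved by noting that $r^{-1-\alpha}u(rx)$ has $L^1_s$ tails when $\alpha<2s-1$. You need a growth bound that reproduces itself exactly at every step. Otherwise the shift-independent Lipschitz constant $L$, and with it compactness, is lost: starting from $\abs v\le 1$, the crude tail bound for $R_k^{-1-\alpha}(v-P_k)(R_k\,\cdot)$ is of order $R_k^{-1-\alpha}$. In the region $1<\abs x\le 1/(2r)$ only the Lipschitz bound is available. With your normalization it gives $\abs{w(x)}\le 2Lr^{-\alpha}\abs x$, and this implies $\abs{w(x)}\le\abs x^{1+\alpha}$ only when $\abs x\ge(2L)^{1/\alpha}/r>1/(2r)$, that is, nowhere in that region.

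The paper handles this as follows:
\begin{itemize}
\item It uses a growth exponent $\alpha_1=(2s-1)/2$, distinct from the final H\"older exponent.
\item It requires the affine approximation on the enlarged ball $B_{Kr}$, with $K^{\alpha_1}\ge 16L$, and takes $\rho_*$ small.
\item It fixes constants in the order $L$, then $K,\rho_*$, then $\theta$, then $\alpha$.
\item It takes compactness over globally Lipschitz blow-up profiles $(v(t_0x)-v(0))/t_0$, rather than over your class, whose members have no regularity outside $B_1$.
\end{itemize}
Finally, the basic input is a local Lipschitz estimate uniform in the shift $q$. Your split into large and bounded $\abs{\ell}$ does not replace it, since treating large $\abs\ell$ as nondegenerate already presupposes the gradient bound.
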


The point of this application is to recover uniform estimates from differentiability alone, without using a quantitative gradient regularity theorem as input. The differentiability assumption includes unbounded solutions with integrable tails, since identifying limits of shifted solutions requires adding affine functions. After rescaling, the local Lipschitz estimates of \cite{DosPrazeresTopp2021} give locally uniform convergence along a subsequence on expanding balls. Together with a common polynomial growth bound of order strictly below $2s$, this yields strong convergence in $L_s^1(\Rn)$. The approximation lemma also ensures that this growth bound is preserved throughout the iteration.

\subsection{Paper organization} The paper is organized as follows. \Cref{sec:holder-principle} proves \Cref{thm:intro-principle} and \Cref{cor:intro-planar} and compares the optimal qualitative and quantitative $p$-harmonic exponents. \Cref{sec:infinity-critical-points} proves \Cref{thm:planar-critical-almost-endpoint}. Finally, \Cref{sec:nonlocal} establishes the nonlocal approximation lemma, including the growth bound at infinity, and proves \Cref{thm:nonlocal-conditional}.

\section{Uniform estimates for \texorpdfstring{$p$}{p}-harmonic functions}\label{sec:holder-principle}

In this section, we prove \Cref{thm:intro-principle} and derive the planar estimate in \Cref{cor:intro-planar}. We begin with the shifted equation generated by affine subtraction, then establish compactness and improvement of flatness before carrying out the iteration. We conclude by comparing the optimal qualitative and quantitative regularity exponents.

\subsection{The shifted normalized equation}

Throughout this section, $n\ge2$ and $p>2$. We consider the homogeneous $p$-Laplace equation
\begin{equation*}
  -\Delta_pu\coloneqq-\diver\bigl(\abs{Du}^{p-2}Du\bigr)=0.
\end{equation*}
Subtracting an affine function leads to a family of shifted equations. The estimates below must be uniform with respect to the shift. For $q,\xi\in\Rn$ and $X\in\operatorname{Sym}(n)$, define, when $\xi+q\ne0$,
\begin{equation}\label{eq:Fq}
  \Fcal_q(\xi,X)\coloneqq-\operatorname{tr}X-(p-2)\left\langle X\frac{\xi+q}{\abs{\xi+q}},\frac{\xi+q}{\abs{\xi+q}}\right\rangle.
\end{equation}

We use the viscosity formulation in which only test functions with a nonzero shifted gradient at the contact point are considered. More precisely, an upper semicontinuous function $v$ is a viscosity subsolution of $\Fcal_q(Dv,D^2v)=0$ in $\Omega$ if, whenever $\phi\in C^2$ touches $v$ from above at $x_0\in\Omega$ and $D\phi(x_0)+q\ne0$, one has
\begin{equation*}
  \Fcal_q(D\phi(x_0),D^2\phi(x_0))\le0.
\end{equation*}
A lower semicontinuous function is a viscosity supersolution if the reverse inequality holds for test functions touching from below with $D\phi(x_0)+q\ne0$. A continuous function is a viscosity solution if it is both a subsolution and a supersolution. No condition is imposed on test functions for which $D\phi(x_0)+q=0$.

For the homogeneous $p$-Laplace equation, this formulation is equivalent to $p$-harmonicity in the weak sense; see \cite[Definition~2.3, Remark~2.4, and Corollary~2.6]{JuutinenLindqvistManfredi2001} and the discussion following Definition~2.3 in \cite{JulinJuutinen2012}. For the shifted equation, the equivalence follows by adding the affine function $q\cdot x$. We record these equivalent formulations in the next lemma.

\begin{lemma}\label{lem:equivalence}
Let $q\in\Rn$ and $v\in C(\Omega)$. The following are equivalent:
\begin{enumerate}
  \item $v$ is a viscosity solution of $\Fcal_q(Dv,D^2v)=0$ in $\Omega$;
  \item $u(x)=v(x)+q\cdot x$ is a viscosity solution of the normalized
  homogeneous $p$-Laplace equation;
  \item $u(x)=v(x)+q\cdot x$ belongs to $W^{1,p}_{\mathrm{loc}}(\Omega)$ and is
  weakly $p$-harmonic;
  \item $v\in W^{1,p}_{\mathrm{loc}}(\Omega)$ and
  \begin{equation*}
    -\diver\bigl(\abs{Dv+q}^{p-2}(Dv+q)\bigr)=0
  \end{equation*}
  weakly in $\Omega$.
\end{enumerate}
\end{lemma}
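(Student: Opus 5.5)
The plan is to chain the four statements through two translations, each of them essentially a change of variables, plus one external equivalence theorem. I would prove (1)$\Leftrightarrow$(2), (2)$\Leftrightarrow$(3) and (3)$\Leftrightarrow$(4).

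For (1)$\Leftrightarrow$(2), set $u(x)=v(x)+q\cdot x$. A $C^2$ function $\phi$ touches $v$ from above (or below) at $x_0$ if and only if $\psi(x)=\phi(x)+q\cdot x$ touches $u$ from above (or below) at $x_0$. Moreover $D\psi(x_0)=D\phi(x_0)+q$ and $D^2\psi=D^2\phi$. Hence the admissibility condition $D\phi(x_0)+q\ne0$ becomes $D\psi(x_0)\ne0$, and
\[
\Fcal_q(D\phi(x_0),D^2\phi(x_0))=\Fcal_0(D\psi(x_0),D^2\psi(x_0)).
\]
The latter is exactly the normalized homogeneous $p$-Laplacian $-\Delta\psi-(p-2)\langle D^2\psi\,\tfrac{D\psi}{|D\psi|},\tfrac{D\psi}{|D\psi|}\rangle$. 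The test functions correspond bijectively, so the sub- and supersolution inequalities transfer in both directions. Continuity is preserved as well. This step is purely formal.

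For (2)$\Leftrightarrow$(3), I would invoke the cited results. In the viscosity formulation that ignores test functions with vanishing gradient, viscosity solutions of the normalized equation coincide with viscosity solutions of $-\Delta_p u=0$. The point is that for $D\psi(x_0)\ne0$ one has $-\Delta_p\psi=|D\psi|^{p-2}\Fcal_0(D\psi,D^2\psi)$, and $p>2$ makes the operator continuous, vanishing at zero gradient. By Juutinen--Lindqvist--Manfredi \cite[Definition~2.3, Remark~2.4, Corollary~2.6]{JuutinenLindqvistManfredi2001} (see also the discussion in \cite{JulinJuutinen2012}), these viscosity solutions are exactly the continuous weak $p$-harmonic functions in $W^{1,p}_{\mathrm{loc}}$. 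The only point needing care is matching the convention at critical points. For $p>2$, $-\Delta_p\psi(x_0)=0$ automatically when $D\psi(x_0)=0$, so imposing no condition there agrees with the standard definition. I expect this reconciliation of definitions to be the main (mild) obstacle, and I would handle it by citing the above references rather than reproving anything.

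Finally, (3)$\Leftrightarrow$(4) is immediate. Since $q\cdot x$ is smooth, $u\in W^{1,p}_{\mathrm{loc}}(\Omega)$ if and only if $v\in W^{1,p}_{\mathrm{loc}}(\Omega)$. Also $Du=Dv+q$ almost everywhere, so the weak formulations $\int|Du|^{p-2}Du\cdot D\varphi=0$ and $\int|Dv+q|^{p-2}(Dv+q)\cdot D\varphi=0$ are identical for every $\varphi\in C_c^\infty(\Omega)$.
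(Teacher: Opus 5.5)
Your proposal is correct and follows the same route as the paper. The paper proves the lemma in the paragraph just before it: it cites Juutinen--Lindqvist--Manfredi and Julin--Juutinen for the equivalence between the zero-gradient-ignoring viscosity formulation and weak $p$-harmonicity, and obtains the shifted case by adding $q\cdot x$. Your check that test functions correspond bijectively with $\Fcal_q(D\phi,D^2\phi)=\Fcal_0(D\psi,D^2\psi)$, and your use of the identity $-\Delta_p\psi=\abs{D\psi}^{p-2}\Fcal_0(D\psi,D^2\psi)$ for $D\psi\neq0$, simply make explicit the details the paper leaves to those references.
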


\subsection{Limits of normalized shifted solutions}

Define
\begin{equation*}
  \Kcal_0 \coloneqq \left\{ v|_{\overline B_{1/2}} : \norm v_{L^\infty(B_1)}\le 1 \quad \textrm{and} \quad \Fcal_q(Dv,D^2v)=0 \ \ \text{in}\ B_1, \ \text{for some}\ q\in \Rn \right\},
\end{equation*}
and let
\begin{equation*}
\Kcal \coloneqq
\overline{\Kcal_0}^{\,C(\overline B_{1/2})}.
\end{equation*}
The next lemma contains the compactness statement and the only two possible limiting equations. The alternatives are not new; they were explicitly proved in \cite{AttouchiParviainenRuosteenoja2017}. We provide here a simpler proof.

\begin{lemma}\label{lem:profile-compactness}
The set $\Kcal$ is compact in $C(\overline B_{1/2})$, endowed with the uniform norm. For every $h\in\Kcal$, at least one of the following alternatives holds:
\begin{enumerate}
\item There is $q\in\Rn$ such that $h+q\cdot x$ is weakly $p$-harmonic in $B_{1/2}$.
\item There is $e\in\Sn$ such that
\begin{equation*}
  -\operatorname{tr}\bigl((I+(p-2)e\otimes e)D^2h\bigr)=0
  \quad\text{in }B_{1/2}.
\end{equation*}
\end{enumerate}
\end{lemma}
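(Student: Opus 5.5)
Compactness will come from a uniform interior Hölder estimate, independent of the shift, for solutions of $\Fcal_q(Dv,D^2v)=0$ with $\norm v_{L^\infty(B_1)}\le1$. I would obtain it from the viscosity formulation. The operator $\Fcal_q$ is uniformly elliptic with ellipticity constants $\min(1,p-1)$ and $\max(1,p-1)$ whenever $\xi+q\neq0$. At test functions with $D\phi(x_0)+q=0$ no condition is imposed. This is exactly the setting of equations that hold only where the (shifted) gradient is large. One route is the Imbert--Silvestre Harnack/Hölder theory for equations satisfied only where $\abs{Dv}\ge\gamma$. A more elementary route is Ishii--Lions doubling of variables, as in Attouchi--Parviainen--Ruosteenoja, which gives a Lipschitz (or Hölder) bound on $B_{3/4}$ with constants depending only on $n,p$. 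The point to verify is that the degenerate set $\{D\phi=-q\}$ is a single point in gradient space, so it can be avoided in the doubling argument by the usual perturbation. Then Arzelà--Ascoli shows that $\Kcal_0$ is precompact in $C(\overline B_{1/2})$, so its closure $\Kcal$ is compact. I would prove the estimate on $B_{3/4}$, so that limits are defined on a slightly larger ball; this is used below.

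For the alternatives, take $h\in\Kcal$ and $v_k\in\Kcal_0$ with shifts $q_k$ and $v_k\to h$ uniformly. The uniform bound on $B_{3/4}$ lets me pass to a subsequence converging locally uniformly in $B_{3/4}$.

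\emph{Bounded shifts.} If $(q_k)$ is bounded, pass to $q_k\to q$. Then $u_k=v_k+q_k\cdot x$ are $p$-harmonic by \Cref{lem:equivalence} and converge locally uniformly to $h+q\cdot x$. The stability of viscosity solutions of the $p$-Laplacian (Juutinen--Lindqvist--Manfredi) gives that the limit is a viscosity, hence weakly, $p$-harmonic function. To be self-contained, I would instead pass to the limit directly in the $\Fcal_q$ formulation. Let $\phi$ touch $h$ strictly at $x_0$ with $D\phi(x_0)+q\neq0$. Then $\phi$ plus constants touches $v_k$ at points $x_k\to x_0$. Since $D\phi(x_k)+q_k\to D\phi(x_0)+q\neq0$, the inequalities hold for large $k$, and continuity of $\Fcal$ away from the singular set passes them to the limit. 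This gives alternative (1).

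\emph{Unbounded shifts.} If $\abs{q_k}\to\infty$, pass to $q_k/\abs{q_k}\to e\in\Sn$. I need to show that $h$ is a viscosity solution of the linear equation. Take $\phi$ touching $h$ strictly from above at $x_0$, with contact points $x_k\to x_0$ for $v_k$. Since $D\phi(x_k)$ is bounded, $D\phi(x_k)+q_k\neq0$ for large $k$, and
\[
\frac{D\phi(x_k)+q_k}{\abs{D\phi(x_k)+q_k}}\to e .
\]
Hence the viscosity inequalities converge to
\[
-\operatorname{tr}\bigl((I+(p-2)e\otimes e)D^2\phi(x_0)\bigr)\le0 ,
\]
and the supersolution inequality follows the same way. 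Here no test function is excluded. Viscosity solutions of this constant-coefficient uniformly elliptic equation are classical solutions, and in fact smooth. This gives alternative (2).

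\emph{Main obstacle.} I expect the shift-independent Hölder (or Lipschitz) estimate to be the hard step. The stability arguments above are routine once it is in hand. If Ishii--Lions turns out to be delicate, I would fall back on citing the Imbert--Silvestre "equations holding only at large gradients" Hölder estimate directly. Its hypotheses are satisfied with gradient threshold $\gamma=0$ after translating the gradient by $q$, and this translation is harmless since that theory is invariant under adding affine functions.
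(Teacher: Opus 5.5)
Your architecture is the paper's: a shift-independent H\"older bound, then Arzel\`a--Ascoli, then the dichotomy $q_k\to q$ versus $\abs{q_k}\to\infty$ with viscosity stability. Both stability arguments are correct. In the bounded case you still need \Cref{lem:equivalence} to turn the limiting $\Fcal_q$ solution into a weakly $p$-harmonic $h+q\cdot x$, but that is available.

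The gap is the step you flag yourself. The uniform estimate is never established, and neither route closes it as you describe it.

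\emph{The fallback is wrong as stated.} The Imbert--Silvestre estimates for inequalities holding where $\abs{Du}>\gamma$ are not invariant under adding affine functions.
\begin{itemize}
\item Applied to $u=v+q\cdot x$, they bound $u$ in terms of $\norm{u}_{L^\infty(B_1)}\le1+\abs q$, which is not uniform in $q$.
\item Applied to $v$ itself, they need $\{\abs{D\phi}>\gamma\}\subset\{D\phi\ne-q\}$, i.e.\ $\gamma\ge\abs q$. The resulting bound must then degenerate in $\abs q$, because every $\gamma$-Lipschitz function satisfies such hypotheses vacuously.
\end{itemize}

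\emph{The Ishii--Lions sketch understates the difficulty.} The coefficient $I+(p-2)a\otimes a$, with $a=(\eta+q)/\abs{\eta+q}$, differs at the two doubled points by $O\bigl(\abs{\eta_1-\eta_2}/\abs{\eta_1+q}\bigr)$. Multiplied by the Hessian blocks, of size $L\varphi'(\delta)/\delta$, this error is absorbed only if $\abs{\eta_i+q}$ is large compared with the localization term. A small perturbation that merely moves $\eta_i+q$ off the single point $0$ gives no such quantitative lower bound. The regime $\abs q\sim L$ needs a separate argument, for instance splitting by the size of $\abs q$ and applying the unshifted estimate to $v+q\cdot x$ when $\abs q$ is bounded.

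\emph{The missing idea.} Because the equation is homogeneous, the excluded test functions can be recovered.
\begin{itemize}
\item Since $I\le I+(p-2)a\otimes a\le(p-1)I$, every normalized shifted solution satisfies $-\abs{Dv+q}^{p-2}\mathcal M^+(D^2v)\le0$ and $-\abs{Dv+q}^{p-2}\mathcal M^-(D^2v)\ge0$.
\item These hold in the full viscosity sense, because the factor vanishes exactly at the tests where no condition was imposed.
\item The Imbert--Silvestre cutting lemma then removes the factor, giving $-\mathcal M^+(D^2v)\le0\le-\mathcal M^-(D^2v)$. This is a single Pucci class, independent of $q$.
\item Krylov--Safonov yields the shift-independent $C^{0,\sigma}$ bound immediately.
\end{itemize}
The perturbation you have in mind, replacing $\phi$ by $\phi+\xi\cdot(x-x_0)$ to move the contact point off $\{D\phi=-q\}$, is essentially how the cutting lemma is proved. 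It should be applied to the Pucci inequalities directly, not inside a doubling argument. Alternatively, cite the uniform estimate of Attouchi--Parviainen--Ruosteenoja outright instead of re-deriving it.
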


\begin{proof}
For $a\in\Sn$, set $A_a\coloneqq I+(p-2)a\otimes a$. Since
\[
  I\le A_a\le(p-1)I,
\]
equation \eqref{eq:Fq} implies that every normalized shifted solution satisfies
\[
  -\abs{Dv+q}^{p-2}\mathcal M^+(D^2v)\le0,
  \qquad
  -\abs{Dv+q}^{p-2}\mathcal M^-(D^2v)\ge0
  \quad\text{in }B_1,
\]
where $\mathcal M^\pm$ are the Pucci extremal operators with ellipticity constants $1$ and $p-1$. These inequalities hold in the full viscosity sense, since the factor vanishes at tests with $D\phi+q=0$. Applying the cutting lemma of \cite[Lemma~6]{ImbertSilvestre2013} separately to the subsolution and supersolution inequalities gives
\[
  -\mathcal M^+(D^2v)\le0,
  \qquad
  -\mathcal M^-(D^2v)\ge0
  \quad\text{in }B_1.
\]
Thus, every normalized shifted solution belongs to the same Pucci class, independently of $q$. The Krylov--Safonov interior H\"older estimate, see \cite{CaffarelliCabre1995}, therefore gives $\sigma=\sigma(n,p)\in(0,1)$ and $C=C(n,p)$ such that
\[
  [v]_{C^{0,\sigma}(B_{3/4})}\le C.
\]
Thus every $v\in\Kcal_0$ satisfies
\[
  \norm v_{C(\overline B_{1/2})}\le1,
  \qquad
  \abs{v(x)-v(y)}\le C\abs{x-y}^{\sigma}
  \quad\text{for all }x,y\in\overline B_{1/2},
\]
and this extends to every $h\in\Kcal$ by uniform convergence. Hence $\Kcal$ is uniformly bounded and equicontinuous on $\overline B_{1/2}$. By Arzel\`a--Ascoli, every sequence in $\Kcal$ has a subsequence converging uniformly on $\overline B_{1/2}$. Since $\Kcal$ is closed in $C(\overline B_{1/2})$ by definition, the limit belongs to $\Kcal$. This proves compactness in the uniform topology.

Fix $h\in\Kcal$. By the definition of $\Kcal$, there are sequences $q_j\in\Rn$ and $v_j\in C(B_1)$ such that
\[
  \Fcal_{q_j}(Dv_j,D^2v_j)=0\quad\text{in }B_1, \qquad \norm{v_j}_{L^\infty(B_1)}\le1,
\]
in the viscosity sense, and
\[
  \norm{v_j-h}_{C(\overline B_{1/2})}\longrightarrow0.
\]
Passing to a subsequence, we may assume that either $q_j\to q\in\Rn$ or $\abs{q_j}\to\infty$.
In the first case, $u_j\coloneqq v_j+q_j\cdot x$ is $p$-harmonic by \Cref{lem:equivalence}, and $u_j\to h+q\cdot x$ locally uniformly. The local-uniform stability of $p$-harmonic functions therefore shows that $h+q\cdot x$ is weakly $p$-harmonic.

In the second case, passing to a further subsequence, $q_j/\abs{q_j}\to e\in\Sn$. Hence
\[
  \Fcal_{q_j}(\xi,X)\longrightarrow
  -\operatorname{tr}\bigl((I+(p-2)e\otimes e)X\bigr)
\]
locally uniformly in $(\xi,X)\in\Rn\times\operatorname{Sym}(n)$. By viscosity stability, $h$ satisfies
\[
  -\operatorname{tr}\bigl((I+(p-2)e\otimes e)D^2h\bigr)=0
  \quad\text{in }B_{1/2}.
\]
This proves the alternatives.
\end{proof}

\subsection{Improvement of flatness on a variable scale}

The following flatness lemma is the step in which the qualitative regularity hypothesis of \Cref{thm:intro-principle} is used.

\begin{lemma}\label{lem:holder-flatness}
Assume the qualitative regularity hypothesis of \Cref{thm:intro-principle}, and let $0<\beta<\gamma$. Then there exists an integer $N\ge1$ and pairs
\[
(\rho_i,b_i)\in(0,1/4)\times\Rn,\qquad 1\le i\le N,
\]
such that, for every $q\in\Rn$ and every viscosity solution
\[
\Fcal_q(Dv,D^2v)=0\quad\text{in }B_1,\qquad \norm{v}_{L^\infty(B_1)}\le1,
\]
there is an index $i\in\{1,\ldots,N\}$ for which
\begin{equation}\label{eq:holder-one-step}
\norm{v-v(0)-b_i\cdot x}_{L^\infty(B_{\rho_i})}<\frac14\rho_i^{1+\beta}.
\end{equation}
\end{lemma}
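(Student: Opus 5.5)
The plan is a compactness-and-covering argument on the compact family $\Kcal$ from \Cref{lem:profile-compactness}. First, I show that every $h\in\Kcal$ is $C^{1,\gamma}$ near the origin. In alternative (1), $h+q\cdot x$ is $p$-harmonic in $B_{1/2}$, so by the qualitative hypothesis $h\in C^{1,\gamma}$ in a neighbourhood of $0$. In alternative (2), $h$ solves a constant-coefficient uniformly elliptic linear equation, so it is smooth. In either case there are $L_h<\infty$ and $r_h\in(0,1/4)$ such that
\[
\norm{h-h(0)-Dh(0)\cdot x}_{L^\infty(B_r)}\le L_h r^{1+\gamma}\qquad\text{for all }0<r\le r_h.
\]
Since $\beta<\gamma$, I then pick $\rho_h\in(0,r_h]$ with $L_h\rho_h^{\gamma-\beta}\le 1/16$. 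This gives
\[
\norm{h-h(0)-Dh(0)\cdot x}_{L^\infty(B_{\rho_h})}\le \tfrac1{16}\rho_h^{1+\beta}.
\]
I set $b_h\coloneqq Dh(0)$.

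Next, this strict inequality is stable under uniform perturbation. Let $g\in C(\overline B_{1/2})$ satisfy $\norm{g-h}_{C(\overline B_{1/2})}<\delta_h\coloneqq\tfrac1{16}\rho_h^{1+\beta}$. Then $\abs{g(0)-h(0)}<\delta_h$, and the triangle inequality gives
\[
\norm{g-g(0)-b_h\cdot x}_{L^\infty(B_{\rho_h})}<\tfrac1{16}\rho_h^{1+\beta}+2\delta_h=\tfrac3{16}\rho_h^{1+\beta}<\tfrac14\rho_h^{1+\beta}.
\]
The open balls $\{g:\norm{g-h}<\delta_h\}$, $h\in\Kcal$, cover $\Kcal$. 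Because $\Kcal$ is compact in $C(\overline B_{1/2})$ by \Cref{lem:profile-compactness}, finitely many of them suffice, say those centred at $h_1,\dots,h_N$. I set $(\rho_i,b_i)\coloneqq(\rho_{h_i},b_{h_i})$; note $\rho_i<1/4\le1/2$, so $B_{\rho_i}\subset B_{1/2}$.

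Finally, let $v$ solve $\Fcal_q(Dv,D^2v)=0$ in $B_1$ with $\norm v_{L^\infty(B_1)}\le1$. Then $v|_{\overline B_{1/2}}\in\Kcal_0\subset\Kcal$, so it lies in some ball of the covering. The perturbation estimate above then yields \eqref{eq:holder-one-step} for that index $i$, and the pairs $(\rho_i,b_i)$ do not depend on $q$ or $v$.

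The only delicate point is the first step, namely that every element of the closure $\Kcal$ has the $C^{1,\gamma}$ expansion at $0$. This is exactly why the classification in \Cref{lem:profile-compactness} is needed: limits with unbounded shifts are not $p$-harmonic, and they must be handled through the linear limit equation. The qualitative hypothesis applies on a ball containing $0$ (here $B_{1/2}$), which is sufficient. The argument is otherwise soft, and the constants $N,\rho_i$ are non-explicit.
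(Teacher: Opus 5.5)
Your proposal is correct and follows essentially the same compactness-and-covering argument as the paper. You classify the limits via \Cref{lem:profile-compactness}, absorb the non-uniform constant $L_h$ with the factor $\rho_h^{\gamma-\beta}$, and extract a finite subcover of $\Kcal$. The only cosmetic difference is the choice of open sets: you use metric balls of radius $\frac1{16}\rho_h^{1+\beta}$ in $C(\overline B_{1/2})$, whereas the paper uses the relatively open sets $\{z\in\Kcal:\norm{z-\ell_h}_{L^\infty(B_{\rho_h})}<\frac18\rho_h^{1+\beta}\}$ and bounds $\abs{v(0)-h_i(0)}$ by that same norm.
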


\begin{proof}
We construct a relatively open covering of $\Kcal$. Fix $h\in\Kcal$. By \Cref{lem:profile-compactness} and the qualitative regularity hypothesis, $h\in C^{1,\gamma}_{\mathrm{loc}}(B_{1/2})$. Indeed, in the first alternative, it differs from a $p$-harmonic function by an affine function, while in the second alternative, it is smooth. Set
\[
\ell_h(x)\coloneqq h(0)+Dh(0)\cdot x.
\]
Since $\beta<\gamma$, we may choose $\rho_h\in(0,1/4)$ so small that
\begin{equation}\label{eq:holder-profile-strict}
\norm{h-\ell_h}_{L^\infty(B_{\rho_h})}\le[Dh]_{C^{0,\gamma}(B_{1/4})}\rho_h^{1+\gamma}<\frac18\rho_h^{1+\beta}.
\end{equation}
Define
\begin{equation}\label{eq:holder-open-set}
\mathcal O_h\coloneqq\left\{z\in\Kcal:\norm{z-\ell_h}_{L^\infty(B_{\rho_h})}<\frac18\rho_h^{1+\beta}\right\}.
\end{equation}
The map $z\mapsto\norm{z-\ell_h}_{L^\infty(B_{\rho_h})}$ is continuous in the uniform norm on $C(\overline B_{1/2})$, so $\mathcal O_h$ is relatively open in $\Kcal$. Moreover, \eqref{eq:holder-profile-strict} gives $h\in\mathcal O_h$. Thus $\{\mathcal O_h:h\in\Kcal\}$ is a relatively open covering of $\Kcal$.

By compactness, choose a finite subcovering $\mathcal O_{h_1},\ldots,\mathcal O_{h_N}$, and set
\[
\rho_i\coloneqq\rho_{h_i},\qquad b_i\coloneqq Dh_i(0).
\]
Let $v$ be any normalized shifted solution as in the statement. Its restriction to $\overline B_{1/2}$ belongs to $\Kcal_0\subset\Kcal$, and hence it lies in some $\mathcal O_{h_i}$. Since $\ell_{h_i}(0)=h_i(0)$, we obtain
\[
\begin{aligned}
\norm{v-v(0)-b_i\cdot x}_{L^\infty(B_{\rho_i})}
&\le\norm{v-\ell_{h_i}}_{L^\infty(B_{\rho_i})}+\abs{v(0)-h_i(0)}\\
&\le2\norm{v-\ell_{h_i}}_{L^\infty(B_{\rho_i})}
<\frac14\rho_i^{1+\beta}.
\end{aligned}
\]
This proves \eqref{eq:holder-one-step}.
\end{proof}

\subsection{Variable-scale iteration and the subendpoint estimate}

We record the exact affine rescaling before iterating. Take $\ell(y)=a+b\cdot y$, let $r,A>0$, and set
\begin{equation*}
  w(x)\coloneqq\frac{v(rx)-\ell(rx)}A.
\end{equation*}
If $v$ solves the shifted equation with parameter $q$, then $w$ solves it with parameter
\begin{equation}\label{eq:general-shift-update}
  \tilde{q}=\frac rA(q+b).
\end{equation}

\begin{proof}[Proof of \Cref{thm:intro-principle}]
Fix $0<\beta<\gamma$ and define $M\coloneqq\norm{u}_{L^\infty(B_{3/4})}$, which is finite by local boundedness. If $M=0$, the estimate is immediate. Otherwise, define
\begin{equation}\label{eq:scaling}
v(x)\coloneqq\frac{u(3x/4)-u(0)}{2M},\qquad x\in B_1.
\end{equation}
Then $v(0)=0$, $\norm{v}_{L^\infty(B_1)}\le1$, and \Cref{lem:equivalence} gives $\Fcal_0(Dv,D^2v)=0$ in $B_1$. 

We set $R_0=1$, $p_0=0$ and $P_0=0$, and construct inductively a sequence $\left( R_k,P_k\right)_k$, with 
$$P_k(x)=v(0)+p_k\cdot x,\qquad k=1,2, \ldots$$
such that
\begin{equation}\label{rachad}
\norm{v-P_k}_{L^\infty(B_{R_k})} \leq R_k^{1+\beta}, \qquad k\in \N_0.
\end{equation}
The case $k=0$ follows from our previous choices. Now, suppose $R_k$ and $P_k(x)$ have been constructed satisfying \eqref{rachad}. Define
\begin{equation*}
v_k(x)\coloneqq\frac{v(R_kx)-P_k(R_kx)}{R_k^{1+\beta}},
\end{equation*}
which satisfies $v_k(0)=0$ and, by \eqref{eq:general-shift-update},
\begin{equation*}
\Fcal_{q_k}(Dv_k,D^2v_k)=0\quad\text{in }B_1,\qquad \textrm{with} \quad  q_k=R_k^{-\beta}p_k.
\end{equation*}
Since we also have $\norm{v_k}_{L^\infty(B_1)}\le1$, we can apply \Cref{lem:holder-flatness}. It selects a pair $(r_k,b_k)=(\rho_{i_k},b_{i_k})$ for which
\begin{equation*}
\norm{v_k-b_k\cdot x}_{L^\infty(B_{r_k})}<\frac14r_k^{1+\beta}.
\end{equation*}
Defining $R_{k+1}\coloneqq R_kr_k$ and 
\begin{equation}\label{eq:holder-P-recursion}
P_{k+1}(y)\coloneqq P_k(y)+R_k^{\beta} b_k\cdot y,
\end{equation}
we obtain
\begin{equation*}
\norm{v-P_{k+1}}_{L^\infty(B_{R_{k+1}})}\le R_{k+1}^{1+\beta},
\end{equation*}
and the induction is complete.

We have
$$\theta^k\le R_k\le\Theta^k\longrightarrow0,$$
where $\theta=\min_i\rho_i$ and $\Theta=\max_i\rho_i$. The finitely many slopes $b_k$ are uniformly bounded. Indeed, let $h_i$ be the profiles selected in the proof of \Cref{lem:holder-flatness}, so that $\ell_{h_i}(x)=h_i(0)+b_i\cdot x$. By \eqref{eq:holder-profile-strict} and $\norm{h_i}_{C(\overline B_{1/2})}\le1$, it follows that
\[
\begin{aligned}
\rho_i\abs{b_i}
&=\norm{\ell_{h_i}-h_i(0)}_{L^\infty(B_{\rho_i})}\\
&\le\norm{\ell_{h_i}-h_i}_{L^\infty(B_{\rho_i})}
+\norm{h_i-h_i(0)}_{L^\infty(B_{\rho_i})}\\
&<\frac18\rho_i^{1+\beta}+2<3.
\end{aligned}
\]
Consequently,
\begin{equation*}
\abs{b_i}\le B_\theta\coloneqq\frac3\theta.
\end{equation*}
Taking gradients in \eqref{eq:holder-P-recursion} yields
\begin{equation*}
p_{k+1}-p_k=R_k^\beta b_k,
\end{equation*}
and thus the sequence $(p_k)$ converges to a vector $p_\infty$.
Since $R_{k+j}\le R_k\Theta^j$ and $\Theta<1/4$, 
\begin{equation}\label{eq:holder-slope-tail}
\abs{p_\infty-p_k}\le S_{\beta,\theta}R_k^\beta,
\qquad S_{\beta,\theta}\coloneqq\frac{3}{\theta(1-4^{-\beta})}.
\end{equation}

Finally, let $0<\abs x<\theta$ and, by the Archimedean property of the reals, choose $k\ge1$ such that 
$$R_{k+1}< \abs x \leq R_k.$$ 
Since $R_{k+1}\ge\theta R_k$, we have $R_k < \theta^{-1}\abs x$. From \eqref{rachad} and \eqref{eq:holder-slope-tail}, we have
\[
\begin{aligned}
\abs{v(x)-v(0)-p_\infty\cdot x}
&\le\abs{v(x)-P_k(x)}+\abs{p_k-p_\infty}\abs x\\
&\le(1+S_{\beta,\theta})R_k^{1+\beta}\\
&\le(1+S_{\beta,\theta})\theta^{-(1+\beta)}\abs x^{1+\beta}.
\end{aligned}
\]
It follows that $v$ is differentiable at the origin and $Dv(0)=p_\infty$. From \eqref{eq:holder-slope-tail} with $k=0$, we immediately obtain 
\begin{equation*}
\abs{Dv(0)}\le S_{\beta,\theta}.
\end{equation*}

For each $x_0\in B_{2/3}$, consider
\begin{equation*}
z_{x_0}(y)\coloneqq\frac{v(x_0+y/4)-v(x_0)}{2},\qquad y\in B_1.
\end{equation*}
Since $B_{1/4}(x_0)\subset B_1$, we have $\norm{z_{x_0}}_{L^\infty(B_1)}\le1$, and $z_{x_0}$ is $p$-harmonic in $B_1$. Applying the estimates to $z_{x_0}$ at the origin gives a uniform gradient bound and uniform affine remainders of order $1+\beta$ at every point in $B_{2/3}$. The standard characterization of $C^{1,\beta}$ by affine approximation gives a uniform $C^{1,\beta}$ estimate for $v$ in $B_{2/3}$. Returning to the original variables via \eqref{eq:scaling} yields \eqref{eq:intro-main-estimate}.
\end{proof}

\subsection{Planar estimates and optimal regularity thresholds}

The main theorem of Iwaniec and Manfredi in \cite{IwaniecManfredi1989} provides the qualitative regularity hypothesis of \Cref{thm:intro-principle}, with $\gamma=\alpha_{\mathrm{IM}}(p)$, where the exponent is defined in \eqref{eq:intro-IM-exponent}.

\begin{proof}[Proof of \Cref{cor:intro-planar}]
Fix $0<\beta<\alpha_{\mathrm{IM}}(p)$. By local boundedness,
\[
M\coloneqq\norm{u}_{L^\infty(B_{3/4})}\le C(p)\norm{u}_{L^p(B_1)}.
\]
Applying \Cref{thm:intro-principle} to $u$, with $\gamma=\alpha_{\mathrm{IM}}(p)$, and using $\norm{u}_{L^\infty(B_{1/2})}\le M$, gives
\[
\norm{u}_{C^{1,\beta}(B_{1/2})}\le C(p,\beta)M \le C(p,\beta)\norm{u}_{L^p(B_1)},
\]
proving \eqref{eq:intro-planar-Lp}.
\end{proof}

We next compare the optimal qualitative and quantitative exponents in arbitrary dimensions. Define
\begin{align*}
  \alpha_{\mathrm{qual}}(n,p)
  &\coloneqq\sup\left\{\alpha\in(0,1] \ : \    
  \text{every $p$-harmonic function in $B_1$ is locally in }C^{1,\alpha}
  \right\},\\
  \alpha_{\mathrm{quant}}(n,p)
  &\coloneqq\sup\left\{\alpha\in(0,1] \ : \ \begin{array}{l}
    \text{there exists $C_\alpha<\infty$ such that every $p$-harmonic function}\\[-0.1mm]
    \text{$u$ in $B_1$ satisfies \eqref{eq:intro-main-estimate}, with $\beta=\alpha$ and $C=C_\alpha$}
  \end{array}\right\}.
\end{align*}
Both sets are nonempty due to classical $p$-harmonic regularity theory.

\begin{corollary}\label{cor:thresholds}
For every $n\ge2$ and $p>2$,
\begin{equation*}
  \alpha_{\mathrm{qual}}(n,p)=\alpha_{\mathrm{quant}}(n,p).
\end{equation*}
\end{corollary}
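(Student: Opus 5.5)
We prove the two inequalities $\alpha_{\mathrm{quant}}(n,p)\le\alpha_{\mathrm{qual}}(n,p)$ and $\alpha_{\mathrm{qual}}(n,p)\le\alpha_{\mathrm{quant}}(n,p)$ separately, working directly with the defining sets.

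\emph{Quantitative implies qualitative.} Let $\alpha$ belong to the quantitative set, with constant $C_\alpha$. We show that every $p$-harmonic function $u$ in $B_1$ is locally $C^{1,\alpha}$. Fix a compact set $K\subset B_1$ and a point $x_0\in K$, and choose $r>0$ with $B_r(x_0)\subset B_1$. The rescaled function $u_{x_0,r}(x)\coloneqq u(x_0+rx)$ is $p$-harmonic in $B_1$, since the $p$-Laplace equation is invariant under translations and dilations. Estimate \eqref{eq:intro-main-estimate} for $u_{x_0,r}$ gives a bound on $\norm{Du}_{L^\infty}+[Du]_{C^{0,\alpha}}$ on $B_{r/2}(x_0)$ in terms of $r$, $C_\alpha$ and $\norm u_{L^\infty(B_{3r/4}(x_0))}$. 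The last quantity is finite by local boundedness of $p$-harmonic functions. We cover $K$ by finitely many such balls and patch the estimates in the standard way, which gives $u\in C^{1,\alpha}_{\mathrm{loc}}(B_1)$. Hence $\alpha$ lies in the qualitative set, and taking suprema yields $\alpha_{\mathrm{quant}}\le\alpha_{\mathrm{qual}}$.

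\emph{Qualitative implies quantitative.} Let $\beta<\alpha_{\mathrm{qual}}(n,p)$. By the definition of the supremum, there is $\gamma$ in the qualitative set with $\beta<\gamma\le\alpha_{\mathrm{qual}}$. The definition quantifies over $p$-harmonic functions in $B_1$, whereas \Cref{thm:intro-principle} assumes regularity in every ball of $\Rn$. We bridge this by translation and scaling: if $u$ is $p$-harmonic in $B_R(x_0)$, then $u(x_0+Rx)$ is $p$-harmonic in $B_1$, so it is locally $C^{1,\gamma}$, and undoing the change of variables shows that $u$ is locally $C^{1,\gamma}$ in $B_R(x_0)$. Thus the hypothesis of \Cref{thm:intro-principle} holds with this $\gamma$. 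The theorem then provides $C(n,p,\gamma,\beta)$ such that \eqref{eq:intro-main-estimate} holds, so $\beta$ lies in the quantitative set. Since $\beta<\alpha_{\mathrm{qual}}$ was arbitrary, we get $\alpha_{\mathrm{quant}}\ge\alpha_{\mathrm{qual}}$.

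Neither step is difficult. The only point requiring care is checking that the qualitative set is an interval below its supremum, or more simply that for each $\beta<\alpha_{\mathrm{qual}}$ we can pick some admissible $\gamma>\beta$; this is immediate from the definition of the supremum. The endpoint $\alpha_{\mathrm{qual}}$ itself need not be attained, but that does not matter, since the corollary equates suprema only.
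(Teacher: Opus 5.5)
Your proposal is correct and is essentially the argument the paper intends, though the paper does not write it out. One direction rescales the quantitative estimate, using local boundedness, to show that the quantitative set is contained in the qualitative one. The other picks an admissible $\gamma\in(\beta,\alpha_{\mathrm{qual}}]$ for each $0<\beta<\alpha_{\mathrm{qual}}$, transfers the hypothesis from $B_1$ to arbitrary balls by scaling, and applies \Cref{thm:intro-principle}.
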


\section{Uniform estimates for the planar infinity-Laplacian}
\label{sec:infinity-critical-points}

In this section, we prove \Cref{thm:planar-critical-almost-endpoint}. Infinity-harmonic functions are understood in the viscosity sense. The passage from qualitative regularity to a uniform estimate requires particular care for the infinity-Laplacian. Subtracting a nonconstant affine function changes the equation, and unbounded affine shifts can converge to an equation that controls only one direction. We instead rescale the solution about its value at the center. Compactness gives a uniform improvement while the normalized central gradient is small; once that gradient reaches a fixed threshold, a quantitative estimate near affine functions completes the argument.

\subsection{Regularity inputs and normalization}
\label{infq:inputs}

Write $B_r=B_r(0)$ and $Q_r=(-r,r)^2$. We work with the normalized class
\[
 \mathcal N=\bigl\{v\in C(B_1):\Delta_\infty v=0\text{ in }B_1,
 \ v(0)=0,\ \|v\|_{L^\infty(B_1)}\le1\bigr\}.
\]
By Evans--Savin \cite[Theorem~5.3]{EvansSavin2008}, there are universal constants $\alpha_0\in(0,1)$ and $C_0\ge1$ for which
\begin{equation}\label{infq:baseline}
 \|Dv\|_{L^\infty(B_{3/4})}
 +[Dv]_{C^{0,\alpha_0}(B_{3/4})}\le C_0,
 \quad v\in\mathcal N.
\end{equation}
We shall also use the following two facts:
\begin{enumerate}
\item Every planar infinity-harmonic function belongs locally to $C^{1,1/3}$ \cite[Theorem~1.1]{Xu2026}.
\item There are universal $\varepsilon_*>0$ and $M_*>0$ such that any infinity-harmonic function $w$ in $Q_4$ satisfies
\begin{equation}\label{infq:near-affine}
 \|w-x_1\|_{L^\infty(Q_4)}<\varepsilon_*
 \quad\Longrightarrow\quad
 [Dw]_{C^{0,1/3}(Q_1)}\le M_*.
\end{equation}
This is the case $\delta=1$ of \cite[Theorem~8.2]{Xu2026}.
\end{enumerate}
The first fact is used without a uniform bound for its H\"older seminorm.

Translations, orthogonal changes of coordinates, and the transformations
\begin{equation}\label{infq:invariance}
 v(x)\longmapsto \frac{v(z+rx)-c}{A},\qquad r,A>0,
\end{equation}
preserve infinity-harmonicity on the corresponding domains. Indeed, the transformed infinity-Laplacian is $r^4/A^3$ times the original one; the same identity for test functions gives the viscosity statement. All rescaled solutions below have this form.

\subsection{Improvement for small central gradients}

\begin{lemma}\label{infq:small-gradient}
Let $\beta\in(0,1/3)$. There exist $\eta\in(0,1)$ and finitely many radii $\rho_1,\ldots,\rho_N\in(0,1/8)$, depending only on $\beta$, such that every $v\in\mathcal N$ with $|Dv(0)|\le\eta$ satisfies
\begin{equation}\label{infq:improvement}
 \|v\|_{L^\infty(B_{\rho_i})}<\rho_i^{1+\beta},
 \qquad\text{for some }i\in\{1,\ldots,N\}.
\end{equation}
\end{lemma}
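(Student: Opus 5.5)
We argue by compactness, as in the proof of \Cref{lem:holder-flatness}, now applied to a class closed under uniform limits with the constraint $|Dv(0)|\le\eta$ built in. First, \eqref{infq:baseline} shows that $\mathcal N$, restricted to $\overline B_{1/2}$, is precompact in $C^1(\overline B_{1/2})$: gradients are uniformly bounded and uniformly $\alpha_0$-H\"older on $B_{3/4}$, so Arzel\`a--Ascoli applies to $v$ and $Dv$ together. Uniform limits of infinity-harmonic functions are infinity-harmonic by viscosity stability. Hence the closure $\overline{\mathcal N}$ of these restrictions consists of infinity-harmonic functions $h$ in $B_{1/2}$ with $h(0)=0$, $\|h\|_{L^\infty}\le1$, and the estimate \eqref{infq:baseline} on $B_{1/2}$. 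Moreover, the map $h\mapsto Dh(0)$ is continuous on $\overline{\mathcal N}$ in the uniform topology, because uniform convergence plus the uniform $C^{1,\alpha_0}$ bound implies $C^1$ convergence. Consequently, $\mathcal K_0\coloneqq\{h\in\overline{\mathcal N}:Dh(0)=0\}$ is compact.

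Next, fix $h\in\mathcal K_0$. By Xu's qualitative theorem (fact (1)), $h\in C^{1,1/3}_{\mathrm{loc}}(B_{1/2})$, with a seminorm $L_h$ on $B_{1/4}$ that may depend on $h$. Since $h(0)=0$ and $Dh(0)=0$, we get $|h(x)|\le L_h|x|^{4/3}$. Because $\beta<1/3$, we may choose $\rho_h\in(0,1/8)$ with $L_h\rho_h^{4/3}<\tfrac12\rho_h^{1+\beta}$. We then define the relatively open set
\[
\mathcal O_h\coloneqq\Bigl\{z\in\overline{\mathcal N}:\|z\|_{L^\infty(B_{\rho_h})}<\rho_h^{1+\beta}\Bigr\},
\]
which contains $h$ and is open in the uniform norm. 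The sets $\mathcal O_h$ cover $\mathcal K_0$, so compactness yields a finite subcover $\mathcal O_{h_1},\dots,\mathcal O_{h_N}$, and we set $\rho_i=\rho_{h_i}$.

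Finally, we choose $\eta$ by contradiction. Let $U=\bigcup_i\mathcal O_{h_i}$, an open set containing $\mathcal K_0$. Suppose that for every $j$ there is $v_j\in\mathcal N$ with $|Dv_j(0)|\le1/j$ and $v_j\notin U$. By compactness, a subsequence converges uniformly to some $h\in\overline{\mathcal N}$. Continuity of the central gradient gives $Dh(0)=0$, so $h\in\mathcal K_0\subset U$. Since $U$ is open, $v_j\in U$ for large $j$, which is a contradiction. Hence some $\eta>0$ works, and then every $v\in\mathcal N$ with $|Dv(0)|\le\eta$ lies in some $\mathcal O_{h_i}$, which is exactly \eqref{infq:improvement}.

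The main obstacle is the continuity of $h\mapsto Dh(0)$, equivalently the closedness of $\mathcal K_0$ and the validity of the contradiction step. This hinges entirely on the uniform $C^{1,\alpha_0}$ compactness from Evans--Savin \eqref{infq:baseline}, which is why only $C^0$ closeness is needed when defining the sets $\mathcal O_h$. All constants depend only on $\beta$, since the class $\mathcal N$ is universal.
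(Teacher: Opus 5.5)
Your proposal is correct and follows the paper's proof essentially step by step: compactness of the normalized class via the Evans--Savin bound, Xu's qualitative $C^{1,1/3}$ regularity to choose $\rho_h$ for limit profiles with vanishing central gradient, a finite subcover, and then a choice of $\eta$. The differences are cosmetic. You take the closure in the uniform topology, which is equivalent here because of the uniform $C^{1,\alpha_0}$ bound. You also obtain $\eta$ by contradiction, whereas the paper sets it to half the positive minimum of $|Dz(0)|$ over the compact set $\mathcal K_\infty\setminus O$.
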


\begin{proof}
Let
\[
 \mathcal K_\infty
 =\overline{\{v|_{\overline B_{1/2}}:v\in\mathcal N\}}^{\,C^1(\overline B_{1/2})},
 \qquad
 \mathcal Z_\infty=\{h\in\mathcal K_\infty:Dh(0)=0\}.
\]
Estimate~\eqref{infq:baseline} gives uniform bounds and equicontinuity for the functions and their gradients on $\overline B_{1/2}$. Arzel\`a--Ascoli therefore makes $\mathcal K_\infty$ compact in $C^1(\overline B_{1/2})$. The map $h\mapsto Dh(0)$ is continuous, so $\mathcal Z_\infty$ is compact as well. Every $h\in\mathcal K_\infty$ satisfies $h(0)=0$ and is infinity-harmonic in $B_{1/2}$, by the stability of viscosity solutions for the continuous operator $(\xi,X)\mapsto\langle X\xi,\xi\rangle$; see \cite[Section~6]{CrandallIshiiLions1992}.

For $h\in\mathcal Z_\infty$, qualitative regularity gives 
\[
   |h(x)| \le  H_h|x|^{4/3},\qquad x\in B_{1/4},
\]
where \(H_h=[Dh]_{C^{0,1/3}(B_{1/4})}\), and we used that $h(0) = |Dh(0)| = 0$. The strict inequality $\beta<1/3$ allows us to choose $\rho_h\in(0,1/8)$ such that
\[
 \|h\|_{L^\infty(B_{\rho_h})}
 <\tfrac12\rho_h^{1+\beta}.
\]
Thus, the relatively open sets
\[
 O_h=\{z\in\mathcal K_\infty:
          \|z\|_{L^\infty(B_{\rho_h})}<\rho_h^{1+\beta}\}
\]
cover $\mathcal Z_\infty$. Choose a finite subcover $O_{h_1},\ldots,O_{h_N}$ and put $O=\bigcup_{i=1}^N O_{h_i}$ and $\rho_i=\rho_{h_i}$.

If $\mathcal K_\infty\setminus O$ is nonempty, compactness and its disjointness from $\mathcal Z_\infty$ imply
\[
 m:=\min_{z\in\mathcal K_\infty\setminus O}|Dz(0)|>0.
\]
Set $\eta=\min\{1/2,m/2\}$. If the complement is empty, set $\eta=1/2$. In either case, $v\in\mathcal N$ and $|Dv(0)|\le\eta$ imply $v|_{\overline B_{1/2}}\in O$, which is precisely~\eqref{infq:improvement}.
\end{proof}

The ambient compact set contains all normalized solutions. Selecting its subset with zero central gradient only after taking the closure is what permits the conclusion for small nonzero gradients.

\subsection{Uniform control above a fixed gradient threshold}

\begin{lemma}\label{infq:noncritical}
For each $\eta\in(0,1)$ there are $\lambda_\eta\in(0,1/16)$ and $A_\eta<\infty$, depending only on $\eta$, such that every $v\in\mathcal N$ with $|Dv(0)|\ge\eta$ satisfies
\begin{equation}\label{infq:noncritical-estimate}
 |v(y)-Dv(0)\cdot y|\le A_\eta|y|^{4/3},
 \quad y\in B_{\lambda_\eta}.
\end{equation}
\end{lemma}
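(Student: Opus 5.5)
The plan is to reduce \eqref{infq:noncritical-estimate} to the near-affine estimate \eqref{infq:near-affine}. The reduction uses a rotation, a rescaling at a small radius, and the uniform $C^{1,\alpha_0}$ bound \eqref{infq:baseline}.

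\emph{Step 1 (normalization).} Let $v\in\mathcal N$ with $p\coloneqq Dv(0)$ and $\eta\le\abs p\le C_0$. Choose a rotation $O$ with $p=\abs p\,O e_1$. For a radius $r>0$ to be fixed, consider
\[
w(x)\coloneqq\frac{v(rOx)}{r\abs p},\qquad x\in Q_4.
\]
This is admissible once $4\sqrt2\,r<3/4$. By \eqref{infq:invariance} and rotation invariance, $w$ is infinity-harmonic in $Q_4$. We have $w(0)=0$ and $Dw(0)=e_1$.

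\emph{Step 2 (closeness to $x_1$).} By \eqref{infq:baseline}, for $\abs y\le 4\sqrt2\,r$,
\[
\abs{v(y)-p\cdot y}\le \int_0^1\abs{Dv(ty)-Dv(0)}\abs y\,dt\le C_0\abs y^{1+\alpha_0}.
\]
Hence
\[
\norm{w-x_1}_{L^\infty(Q_4)}\le \frac{C_0(4\sqrt2\,r)^{1+\alpha_0}}{r\abs p}\le \frac{C_0(4\sqrt2)^{1+\alpha_0}}{\eta}\,r^{\alpha_0}.
\]
Choose $r=r_\eta$ small, depending only on $\eta$, $C_0$ and $\alpha_0$, so that the right side is less than $\varepsilon_*$. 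Then \eqref{infq:near-affine} gives $[Dw]_{C^{0,1/3}(Q_1)}\le M_*$.

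\emph{Step 3 (back to $v$).} Since $Dw(0)=e_1$ and $w(0)=0$, the integral argument of Step 2 on $B_1\subset Q_1$ gives
\[
\abs{w(x)-x_1}\le M_*\abs x^{4/3},\qquad \abs x<1.
\]
Substituting $y=rOx$, and noting that $r\abs p\,x_1=p\cdot y$, we get for $\abs y<r_\eta$
\[
\abs{v(y)-p\cdot y}\le r\abs p\,M_*\abs{y/r}^{4/3}\le C_0M_*\,r_\eta^{-1/3}\abs y^{4/3}.
\]
So we set $\lambda_\eta\coloneqq\min\{r_\eta,1/32\}$ and $A_\eta\coloneqq C_0M_*r_\eta^{-1/3}$. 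Both depend only on $\eta$ through universal constants.

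\emph{Main obstacle.} The delicate point is Step 2. We must ensure the closeness threshold $\varepsilon_*$ is reached uniformly in $v$. This uses the lower bound $\abs p\ge\eta$ to control the division by $r\abs p$, and the uniform $C^{1,\alpha_0}$ modulus from Evans--Savin. Everything else is the bookkeeping of the scaling invariance \eqref{infq:invariance}, together with the check that $Q_4$ rescaled by $r$ stays inside $B_{3/4}$.
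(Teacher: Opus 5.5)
Your proposal is correct and follows the paper's proof essentially step for step. It uses the same rotation and rescaling $w(x)=v(rOx)/(r|Dv(0)|)$ on $Q_4$, the Evans--Savin bound \eqref{infq:baseline} together with $|Dv(0)|\ge\eta$ to reach the threshold $\varepsilon_*$ uniformly, and the near-affine estimate \eqref{infq:near-affine}. The only difference is cosmetic: you integrate the $C^{0,1/3}$ bound for $Dw$ before scaling back, whereas the paper first transfers it to $[Dv]_{C^{0,1/3}(B_{\lambda_\eta})}\le C_0M_*\lambda_\eta^{-1/3}$ and then integrates.
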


\begin{proof}
Let $a=|Dv(0)|$. By~\eqref{infq:baseline}, $\eta\le a\le C_0$. Choose an orthogonal matrix $T$ with $Te_1=Dv(0)/a$, and set
\[
 L_0=C_0(4\sqrt2)^{1+\alpha_0},\qquad
 \lambda_\eta=
 \min\left\{\frac1{32},
       \left(\frac{\varepsilon_*\eta}{2L_0}\right)^{1/\alpha_0}\right\}.
\]
For $x\in Q_4$, define
\[
 w(x)=\frac{v(\lambda_\eta Tx)}{a\lambda_\eta}.
\]
The inclusion $\lambda_\eta TQ_4\subset B_{1/2}$ follows from $4\sqrt2\lambda_\eta<1/2$. Hence $w$ is infinity-harmonic in $Q_4$, and $w(0)=0$, $Dw(0)=e_1$. Applying~\eqref{infq:baseline} along segments gives
\[
 |v(y)-Dv(0)\cdot y|
 \le\frac{C_0}{1+\alpha_0}|y|^{1+\alpha_0},
 \quad y\in B_{1/2}.
\]
Consequently,
\[
 \|w-x_1\|_{L^\infty(Q_4)}
 \le\frac{L_0}{\eta}\lambda_\eta^{\alpha_0}
 \le\frac{\varepsilon_*}{2}<\varepsilon_*.
\]
By~\eqref{infq:near-affine}, $[Dw]_{C^{0,1/3}(Q_1)}\le M_*$. Since $B_1\subset Q_1$ and
\[
 Dw(x)=a^{-1}T^{\mathsf T}Dv(\lambda_\eta Tx),
\]
we obtain
\[
 [Dv]_{C^{0,1/3}(B_{\lambda_\eta})}
 \le aM_*\lambda_\eta^{-1/3}
 \le C_0M_*\lambda_\eta^{-1/3}.
\]
This proves~\eqref{infq:noncritical-estimate}, with $A_\eta=C_0M_*\lambda_\eta^{-1/3}$.
\end{proof}

\subsection{The stopping argument and the gradient estimate}

\begin{proof}[Proof of \Cref{thm:planar-critical-almost-endpoint}]
Fix $\beta\in(0,1/3)$ and the constants from \Cref{infq:small-gradient}. Set
\[
 \theta=\min_i\rho_i>0,\qquad
 \Theta=\max_i\rho_i<1/8,\qquad
 \lambda=\lambda_\eta,\qquad A=A_\eta.
\]
We first establish a constant $D_\beta$, independent of $v\in\mathcal N$, such that
\begin{equation}\label{infq:taylor}
 \|v-Dv(0)\cdot x\|_{L^\infty(B_r)}
 \le D_\beta r^{1+\beta},\qquad 0<r<1.
\end{equation}

\smallskip
\noindent\emph{Construction of the scales.}
Write $g=Dv(0)$ and start with $R_0=1$, $v_0=v$. As long as $|Dv_k(0)|<\eta$, apply \Cref{infq:small-gradient} to
\[
 v_k(x)=\frac{v(R_kx)}{R_k^{1+\beta}}\in\mathcal N.
\]
It selects $r_k\in\{\rho_1,\ldots,\rho_N\}$ for which
\[
 \|v_k\|_{L^\infty(B_{r_k})}<r_k^{1+\beta}.
\]
With $R_{k+1}=R_kr_k$, invariance~\eqref{infq:invariance} and this inequality imply
\[
 v_{k+1}(x)=\frac{v_k(r_kx)}{r_k^{1+\beta}}
          =\frac{v(R_{k+1}x)}{R_{k+1}^{1+\beta}}\in\mathcal N.
\]
At every constructed scale,
\begin{equation}\label{infq:scale-relations}
 Dv_k(0)=R_k^{-\beta}g,\qquad
 \|v\|_{L^\infty(B_{R_k})}\le R_k^{1+\beta},\qquad
 \theta R_k\le R_{k+1}\le\Theta R_k,
\end{equation}
where the last inequalities apply when the next scale is constructed.

If $g=0$, the iteration never stops and $R_k\le\Theta^k\to0$. For $0<r<1$, choose $j$ with $R_{j+1}\le r\le R_j$. Then $R_j\le r/\theta$, so
\[
 \|v\|_{L^\infty(B_r)}\le R_j^{1+\beta}
 \le\theta^{-(1+\beta)}r^{1+\beta}.
\]
This proves~\eqref{infq:taylor} when $g=0$.

Suppose now that $g\ne0$. The iteration must stop at a finite index $K$: otherwise $R_k\le\Theta^k$ and $|Dv_k(0)|=R_k^{-\beta}|g|\to\infty$, contradicting $|Dv_k(0)|<\eta$. At this first stopping index,
\begin{equation}\label{infq:stopping-gradient}
 \eta\le |Dv_K(0)|\le C_0,
 \qquad |g|<\eta R_j^\beta, \quad 0\le j<K.
\end{equation}
Put $R_*=R_K$. The index $K=0$ is allowed.

\smallskip
\noindent\emph{Radii above the stopping scale.}
If $R_*<r<1$, choose $j<K$ with $R_{j+1}\le r\le R_j$. By~\eqref{infq:scale-relations} and~\eqref{infq:stopping-gradient},
\begin{equation}\label{infq:outer-scales}
 \begin{aligned}
 \|v-g\cdot x\|_{L^\infty(B_r)}
 &\le R_j^{1+\beta}+|g|r\\
 &\le(1+\eta)R_j^{1+\beta}
 \le(1+\eta)\theta^{-(1+\beta)}r^{1+\beta}.
 \end{aligned}
\end{equation}
This range is empty when $K=0$.

\smallskip
\noindent\emph{Radii comparable to the stopping scale.}
For $\lambda R_*\le r\le R_*$, the bounds $\|v_K\|_{L^\infty(B_1)}\le1$ and $|Dv_K(0)|\le C_0$ give
\begin{equation}\label{infq:middle-scales}
 \begin{aligned}
 \|v-g\cdot x\|_{L^\infty(B_r)}
 &\le (1+C_0)R_*^{1+\beta}\\
 &\le (1+C_0)\lambda^{-(1+\beta)}r^{1+\beta}.
 \end{aligned}
\end{equation}

\smallskip
\noindent\emph{Radii below the noncritical scale.}
For $0<r\le\lambda R_*$, apply \Cref{infq:noncritical} to $v_K$. Using $Dv_K(0)=R_*^{-\beta}g$, we find
\begin{equation}\label{infq:inner-scales}
 \begin{aligned}
 \|v-g\cdot x\|_{L^\infty(B_r)}
 &=R_*^{1+\beta}
   \|v_K-Dv_K(0)\cdot x\|_{L^\infty(B_{r/R_*})}\\
 &\le A R_*^{1+\beta}(r/R_*)^{4/3}\\
 &=A r^{1+\beta}(r/R_*)^{1/3-\beta}
 \le A r^{1+\beta}.
 \end{aligned}
\end{equation}
The last inequality uses $r/R_*\le\lambda<1$ and $\beta<1/3$. Thus, no constant depends on $K$ or on $R_*$. Combining the three ranges with the case $g=0$ proves~\eqref{infq:taylor}, with
\begin{equation}\label{infq:taylor-constant}
 D_\beta=
 \max\bigl\{(1+\eta)\theta^{-(1+\beta)},
            (1+C_0)\lambda^{-(1+\beta)},A\bigr\}.
\end{equation}

\smallskip
\noindent\emph{Recentring and comparison of tangent planes.}
Consider first a solution $u$ in $B_1$, and put $M=\osc_{B_{3/4}}u$, which is finite by continuity on the compact set $\overline B_{3/4}\subset B_1$. If $M=0$, then $u$ is constant on $B_{3/4}$ and the conclusion is immediate. Otherwise, for each $z\in B_{1/2}$ the function
\[
 v_z(x)=\frac{u(z+x/4)-u(z)}{M},\qquad x\in B_1,
\]
belongs to $\mathcal N$, since $B_{1/4}(z)\subset B_{3/4}$. Let $L_z(y)=u(z)+Du(z)\cdot(y-z)$. Rescaling~\eqref{infq:taylor} gives
\begin{equation}\label{infq:uniform-expansions}
 |u(y)-L_z(y)|\le E_\beta M|y-z|^{1+\beta}
 \quad\text{if }|y-z|<1/4,
 \qquad E_\beta=4^{1+\beta}D_\beta.
\end{equation}
For a point on the boundary of a chosen comparison ball, the same bound follows by continuity. Also, $Dv_z(0)=Du(z)/(4M)$ and~\eqref{infq:baseline} imply
\begin{equation}\label{infq:gradient-bound}
 |Du(z)|\le4C_0M, \quad z\in B_{1/2}.
\end{equation}

Let $z,z'\in B_{1/2}$ and $0<d=|z-z'|<1/16$. Set $m=(z+z')/2$. For $y\in B_d(m)$, both $|y-z|$ and $|y-z'|$ are at most $3d/2<1/4$. Hence
\[
 \|L_z-L_{z'}\|_{L^\infty(B_d(m))}
 \le 2E_\beta M(3d/2)^{1+\beta}.
\]
If $P(y)=b+q\cdot(y-m)$ is affine, then
\[
 2d|q|=\osc_{B_d(m)}P
 \le2\|P\|_{L^\infty(B_d(m))}.
\]
Applying this identity to $P=L_z-L_{z'}$ yields
\[
 |Du(z)-Du(z')|
 \le2(3/2)^{1+\beta}E_\beta M d^\beta.
\]
For $d\ge1/16$, estimate~\eqref{infq:gradient-bound} instead gives
\[
 |Du(z)-Du(z')|\le8C_0M
 \le8C_0\,16^\beta M d^\beta.
\]
We have proved~\eqref{eq:planar-uniform-gradient}, for instance with
\[
 C_\beta=4C_0+
 \max\bigl\{2(3/2)^{1+\beta}E_\beta,\,8C_0\,16^\beta\bigr\}.
\]
For a solution in $B_R(x_0)$, applying this estimate to $x\mapsto u(x_0+Rx)$ gives
\begin{equation}\label{infq:scaled-estimate}
 R\|Du\|_{L^\infty(B_{R/2}(x_0))}
 +R^{1+\beta}[Du]_{C^{0,\beta}(B_{R/2}(x_0))}
 \le C_\beta\osc_{B_{3R/4}(x_0)}u.
\end{equation}
\end{proof}

\begin{remark}[Critical-point decay]
\label{infq:scope}
If $Du(x_0)=0$, estimate~\eqref{infq:scaled-estimate} and integration along segments give, for $0<r\le R/2$,
\begin{equation}\label{eq:planar-critical-main-estimate}
 \|u-u(x_0)\|_{L^\infty(B_r(x_0))}
 +r\|Du\|_{L^\infty(B_r(x_0))}
 \le C_\beta(r/R)^{1+\beta}\osc_{B_{3R/4}(x_0)}u,
\end{equation}
after enlarging $C_\beta$.
\end{remark}

\section{The nonlocal superconductivity problem}\label{sec:nonlocal}

In this final section, we prove \Cref{thm:nonlocal-conditional}. Let $s\in(1/2,1)$. The homogeneous nonlocal superconductivity problem studied in \cite{AraujoSobral2026} is 
\[
(-\Delta)^su=0\quad\text{in }B_1\cap\Omega,
\qquad \abs{Du}=0\quad\text{in }B_1\setminus\Omega,
\]
where the unknowns are the function $u$ and the open set $\Omega$, and 
\[
(-\Delta)^s u(x)=c_{n,s}\,\operatorname{P.V.}\int_{\Rn}\frac{u(x)-u(y)}{\abs{x-y}^{n+2s}}\,dy.
\]
We work with the associated nonlocal equation \eqref{eq:scalar-nonlocal}, which is satisfied in the viscosity sense by every solution of the problem. As in the $p$-harmonic case, we first consider the shifted equation generated by affine subtraction. The approximation argument also establishes the growth bound at infinity needed for the iteration.

\subsection{The shifted equation and local estimates}

For $q\in\Rn$, we consider
\begin{equation}\label{eq:nonlocal-shifted}
\abs{Dv+q}(-\Delta)^sv=0\quad\text{in }B_1,
\end{equation}
whose solutions belong to $C(B_1)\cap L_s^1(\Rn)$ (recall \eqref{eq:tail-space}) and are understood in the viscosity sense. More precisely, if $\phi\in C^2$ touches $v$ from above at $x_0\in B_1$, let $\phi_{\delta,v}$ agree with $\phi$ in $B_\delta(x_0)$ and with $v$ outside that ball. For every sufficiently small $\delta>0$, the required inequality is
\[
\abs{D\phi(x_0)+q}(-\Delta)^s\phi_{\delta,v}(x_0)\le0.
\]
The reverse inequality holds for tests from below. In particular, no condition is imposed when $D\phi(x_0)+q=0$. Exterior values need only be measurable, and inequalities concerning those values are understood almost everywhere.

Since $s>1/2$, affine functions belong to the space $L_s^1(\Rn)$  and have vanishing fractional Laplacian. Thus $v$ solves \eqref{eq:nonlocal-shifted} if and only if $v+q\cdot x$ solves the nonlocal equation \eqref{eq:scalar-nonlocal}. The differentiability hypothesis of \Cref{thm:nonlocal-conditional} therefore applies to solutions of the shifted equation after adding an affine function.

For solutions bounded in $B_1$, we use the local Lipschitz estimate
\begin{equation}\label{eq:nonlocal-local-lipschitz}
[v]_{C^{0,1}(B_{1/2})}\le C\left(\norm{v}_{L^\infty(B_1)}+\norm{v}_{L_s^1(\Rn)}\right),
\end{equation}
where $C=C(n,s)$ is independent of $q$. This follows from the localized proof of \cite[Proposition~2.1 and Lemmas~2.2--2.3]{DosPrazeresTopp2021}. The same argument applies to the nonhomogeneous equation obtained after a cutoff, yielding \eqref{eq:nonlocal-local-lipschitz} with a constant independent of $q$; see \cite[proof of Theorem~1.2]{AraujoSobral2026}.

\subsection{Approximation and growth at infinity}

Fix $\alpha_1\coloneqq(2s-1)/2$. Since $1+\alpha_1<2s$, the bound
\[
\abs{v(x)}\le\max\{1,\abs x^{1+\alpha_1}\}, \quad x\in\Rn,
\]
gives a uniform bound for $\norm{v}_{L_s^1(\Rn)}$. Hence \eqref{eq:nonlocal-local-lipschitz} gives \([v]_{C^{0,1}(B_{1/2})}\le L\) for some $L=L(n,s)\ge1$, independently of the shift.

\begin{lemma}\label{lem:nonlocal-approximation}
Assume the differentiability hypothesis of \Cref{thm:nonlocal-conditional}. There exist $K\ge2$ and $0<\theta<\rho_*<1/(4K)$, depending only on $n$ and $s$, with the following property: for every $q\in\Rn$ and every solution $v$ of \eqref{eq:nonlocal-shifted} satisfying
\[
\abs{v(x)}\le\max\{1,\abs x^{1+\alpha_1}\}, \quad x\in\Rn,
\]
there exist $r\in[\theta,\rho_*]$ and $b\in\Rn$, with $\abs b\le L$, such that
\begin{equation}\label{eq:nonlocal-affine-approximation}
\norm{v-v(0)-b\cdot x}_{L^\infty(B_{Kr})}<\frac r8.
\end{equation}
Moreover, the rescaled function
\[
w(x)\coloneqq\frac8r\bigl(v(rx)-v(0)-rb\cdot x\bigr)
\]
satisfies \(\abs{w(x)}\le\max\{1,\abs x^{1+\alpha_1}\}\), for \(x \in \Rn\).
\end{lemma}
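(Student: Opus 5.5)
Following the proof of \Cref{lem:holder-flatness}, the plan is a compactness argument. Let $\mathcal G$ be the class of all pairs $(v,q)$ such that $v$ solves \eqref{eq:nonlocal-shifted} and satisfies the growth bound $\abs{v(x)}\le\max\{1,\abs x^{1+\alpha_1}\}$, and let $\Kcal_{\mathrm{nl}}$ be the closure of the set of such $v$ in the topology of locally uniform convergence on $\Rn$. The growth bound gives a uniform $L_s^1(\Rn)$ bound. The scaled form of \eqref{eq:nonlocal-local-lipschitz}, applied on balls $B_R$ after rescaling $x\mapsto Rx$ (the growth bound is compatible with this, and the shift is absorbed since the constant is independent of $q$), gives uniform Lipschitz bounds on every ball. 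Hence $\Kcal_{\mathrm{nl}}$ is compact for locally uniform convergence. The growth bound with $1+\alpha_1<2s$ and dominated convergence then upgrade this to convergence in $L_s^1(\Rn)$.

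Next, I would identify the limits as in \Cref{lem:profile-compactness}. Take $v_j\to h$ with shifts $q_j$. If $q_j\to q$, viscosity stability for nonlocal equations (local uniform convergence together with $L^1_s$ convergence of the tails) shows that $h$ solves \eqref{eq:nonlocal-shifted} with parameter $q$, so $h+q\cdot x\in C(B_1)\cap L_s^1(\Rn)$ solves \eqref{eq:scalar-nonlocal}. If $\abs{q_j}\to\infty$, then dividing by $\abs{q_j}$ gives the limit equation $(-\Delta)^s h=0$ in $B_1$, which is also of the form \eqref{eq:scalar-nonlocal} (adding any nonconstant affine function, or using the smoothness of $s$-harmonic functions). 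In both cases the differentiability hypothesis, or interior smoothness, gives that $h$ is differentiable at $0$. Moreover $\abs{Dh(0)}\le L$ by the uniform Lipschitz bound.

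For the covering step, fix $h\in\Kcal_{\mathrm{nl}}$. Differentiability at $0$ gives $\norm{h-h(0)-Dh(0)\cdot x}_{L^\infty(B_{Kr})}=o(r)$. Choose $r_h\le\rho_*$ with this quantity $<r_h/32$, and let $\mathcal O_h$ be the set of $z$ with $\norm{z-h(0)-Dh(0)\cdot x}_{L^\infty(B_{Kr_h})}<r_h/32$. These sets are open, so a finite subcover yields finitely many radii, and we take $\theta=\min_i r_i$. Estimating $\abs{v(0)-h_i(0)}$ as in \Cref{lem:holder-flatness} gives \eqref{eq:nonlocal-affine-approximation} with margin $r/16$.

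The growth bound for $w$ is the main obstacle, and it is where $K$ enters. For $\abs x\le K$, \eqref{eq:nonlocal-affine-approximation} gives $\abs{w}\le 1$. For $\abs x\ge K$, we have
\[
\abs{w(x)}\le \frac8r\bigl(\abs{rx}^{1+\alpha_1}+1+1+rL\abs x\bigr)\le 8r^{\alpha_1}\abs x^{1+\alpha_1}+\frac{16}{r}+8L\abs x.
\]
To control this, first fix $K$ large so that $8L\abs x\le \tfrac13\abs x^{1+\alpha_1}$ for $\abs x\ge K$. Then take $\rho_*$ small so that $8\rho_*^{\alpha_1}\le\tfrac13$. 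The remaining term $16/r$ is the difficulty, because it must be compared with $\abs x^{1+\alpha_1}$ for $\abs x\ge K$, which requires $K^{1+\alpha_1}\gtrsim 1/r$. This forces the fix of replacing the ball $B_K$ by the region where $\abs{rx}\le1$ is not yet reached. Concretely, I would impose the approximation \eqref{eq:nonlocal-affine-approximation} on $B_{Kr}$ with $Kr$ compared to $1$ by using $\abs v\le1$ on $B_1$: for $K\le\abs x\le 1/r$ the bound $\abs{v(rx)}\le1$ still gives a $16/r$ term. The real fix is to use the margin: on $B_{Kr}$ the error is $<r/16$, and on the intermediate annulus one uses the Lipschitz bound $\abs{v(rx)-v(0)}\le Lr\abs x$ for $r\abs x\le 1/2$ (so $\abs w\le 16L\abs x$, absorbed by the choice of $K$), and only for $r\abs x\ge1/2$ the crude bound, where $1/r\le 2\abs x$ is absorbed as well. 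I would therefore order the constants as $K$ first (depending on $L$), then $\rho_*$, then $\theta$ from the compactness.
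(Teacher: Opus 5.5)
\textbf{Your growth bound for $w$ is fine; the gap is in the compactness step.} The final version of your growth argument matches the paper: the three regions are $\abs x\le K$, then $\abs x>K$ with $r\abs x\le 1/2$ (Lipschitz bound, so $\abs w\le16L\abs x$, absorbed by $K^{\alpha_1}\gtrsim L$), then $r\abs x\ge1/2$ (crude bound, using $1/r\le2\abs x$ and smallness of $\rho_*$), with $K$ fixed before $\rho_*$.

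The problem is compactness. The functions in the lemma solve \eqref{eq:nonlocal-shifted} only in $B_1$. On $\Rn\setminus B_1$ they are merely measurable functions obeying the growth bound. For instance, any $s$-harmonic function in $B_1$ with arbitrary measurable exterior data bounded by $1$ is admissible. Consequences:
\begin{itemize}
\item You cannot rescale by $x\mapsto Rx$ with $R>1$.
\item There are no Lipschitz bounds, not even continuity, outside $B_1$.
\item Your class is not compact for locally uniform convergence on $\Rn$.
\item Dominated convergence cannot give $L^1_s$ convergence of the tails, because there is no pointwise convergence outside $B_1$.
\end{itemize}
Your identification of limits via viscosity stability relies on exactly this tail convergence. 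So, as written, you never produce a limit to which the differentiability hypothesis applies, and the covering argument has nothing to cover.

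\textbf{The missing idea is a blow-up at a universal small scale.} The paper builds the finite cover from the class of \emph{entire} solutions $h$ with $h(0)=0$ and $[h]_{C^{0,1}(\Rn)}\le L$. It then shows by contradiction that there is $t_0$ such that $x\mapsto(v(t_0x)-v(0))/t_0$ lies in one of the covering sets for every admissible $v$. For a contradicting sequence $t_j\downarrow0$, the functions $u_j(x)=(v_j(t_jx)-v_j(0))/t_j$:
\begin{itemize}
\item solve the shifted equation in $B_{1/t_j}$;
\item are $L$-Lipschitz on the expanding balls $B_{1/(2t_j)}$;
\item satisfy $\abs{u_j(x)}\le(L+4)\abs x+\abs x^{1+\alpha_1}$ on all of $\Rn$.
\end{itemize}
This gives locally uniform convergence on $\Rn$. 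Since $1+\alpha_1<2s$, dominated convergence then gives strong convergence in $L^1_s(\Rn)$. So the limit is an entire solution in the class, which is a contradiction. This step is also where the universal lower bound $\theta=t_0\min_i\rho_i$ comes from; in your scheme $\theta$ would come from a compactness you do not have.

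Alternatively, you could keep a direct approach. You would compactify only restrictions to $\overline B_{1/2}$, using interior Lipschitz bounds, and pass the exterior data to a weak-$*$ limit in a weighted $L^\infty$ space. This works because the nonlocal term is linear in the exterior data. But that stability argument is different from the one you state and is absent from your proposal.
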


\begin{proof}
We construct an open covering of a compact family of global solutions. Choose $K\ge2$ and $\rho_*\in(0,1/(4K))$ so that
\begin{equation}\label{eq:nonlocal-choice-of-scales}
K^{\alpha_1}\ge16L,
\qquad 8\cdot2^{\alpha_1}(4+L)\rho_*^{\alpha_1}\le1.
\end{equation}
Define
\begin{equation*}
  \Kcal \coloneqq
  \left\{
    h \colon \Rn \to \mathbb{R} \ : \
    \begin{array}{l}
      h(0)=0, \quad [h]_{C^{0,1}(\Rn)}\le L, \\[2pt]
      \abs{Dh+q}(-\Delta)^s h=0 \ \text{in } \Rn,
      \quad \text{for some } q\in\Rn
    \end{array}
  \right\}.
\end{equation*}
This family is compact in the locally uniform topology of $\Rn$. Indeed, the bound $\abs{h(x)}\le L\abs x$ gives subsequential convergence by Arzel\`a--Ascoli, and dominated convergence gives convergence in $L_s^1(\Rn)$, since $2s>1$. For a sequence with shifts $q_j$, pass to a further subsequence such that either $q_j\to q$ or $\abs{q_j}\to\infty$. In the first case, stability gives the shifted equation for the limit. In the second case, dividing the viscosity inequalities by the nonzero shifted gradient factors gives $(-\Delta)^sh=0$ in the limit, which belongs to $\Kcal$ with shift zero, see \cite[Lemma~4.3]{CaffarelliSilvestre2011}. Moreover, every $h\in\Kcal$ is differentiable at the origin by the hypothesis applied to $h+q\cdot x$, and $\abs{Dh(0)}\le L$.

For each $h\in\Kcal$, choose $\rho_h\in(0,\rho_*)$ such that
\[
\norm{h-Dh(0)\cdot x}_{L^\infty(B_{K\rho_h})}<\frac{\rho_h}{8},
\]
and define the open set
\[
\mathcal O_h\coloneqq\left\{z\in C(\overline B_{1/2}):\norm{z-Dh(0)\cdot x}_{L^\infty(B_{K\rho_h})}<\frac{\rho_h}{8}\right\}.
\]
These sets cover the restrictions of $\Kcal$ to $\overline B_{1/2}$. By compactness, we extract a finite subcovering $\mathcal O_{h_1},\ldots,\mathcal O_{h_N}$, with corresponding radii $\rho_1,\ldots,\rho_N$.

We claim that there is a sufficiently small $t_0\in(0,1/2)$ such that, for every solution $v$ in the statement of the lemma, the restriction of $x\mapsto(v(t_0x)-v(0))/t_0$ to $\overline B_{1/2}$ belongs to one of the sets in this finite subcovering. Suppose otherwise. Then, there exist parameters $t_j\in(0,1/2)$ with $t_j\downarrow0$ and solutions $v_j$ as in the statement of the lemma, with shifts $q_j$, such that the functions
\[
u_j(x)\coloneqq\frac{v_j(t_jx)-v_j(0)}{t_j}
\]
satisfy
\[
u_j|_{\overline B_{1/2}}\notin\bigcup_{i=1}^N\mathcal O_{h_i}.
\]
We have $u_j(0)=0$ and
\[
\abs{Du_j+q_j}(-\Delta)^su_j=0\quad\text{in }B_{1/t_j},
\qquad [u_j]_{C^{0,1}(B_{1/(2t_j)})}\le L.
\]
Using the Lipschitz estimate for $t_j\abs x\le1/2$, and the growth bound for $t_j\abs x > 1/2$, gives
\[
\abs{u_j(x)}\le(L+4)\abs x+\abs x^{1+\alpha_1}, \quad x\in\Rn.
\]
Since $1+\alpha_1<2s$, Arzel\`a--Ascoli on expanding balls and dominated convergence give, after passing to a subsequence,
\[
u_j\longrightarrow h\quad\text{locally uniformly in }\Rn,
\qquad \norm{u_j-h}_{L_s^1(\Rn)}\longrightarrow0.
\]
The preceding stability argument shows that $h\in\Kcal$. Its restriction to $\overline B_{1/2}$ therefore belongs to some $\mathcal O_{h_i}$. By openness and uniform convergence, the same holds for $u_j$, for all sufficiently large $j$, a contradiction. This proves the claim.

Set $\theta\coloneqq t_0\min_i\rho_i$. For a solution $v$ as in the statement of the lemma, choose the element of the subcovering and the corresponding $\rho_i$, and take $r\coloneqq t_0\rho_i$ and $b\coloneqq Dh_i(0)$. Membership in $\mathcal O_{h_i}$ and rescaling give \eqref{eq:nonlocal-affine-approximation}, with $r\in[\theta,\rho_*]$ and $\abs b\le L$.

It remains to verify the growth of $w$. For $\abs x\le K$, \eqref{eq:nonlocal-affine-approximation} and continuity give $\abs{w(x)}\le1$. If $\abs x>K$ and $r\abs x\le1/2$, then
\[
\abs{w(x)}\le16L\abs x\le\abs x^{1+\alpha_1}.
\]
If $r\abs x\ge1/2$, the original growth bound gives
\[
\begin{aligned}
\abs{v(rx)-v(0)-rb\cdot x}
&\le\max\{1,(r\abs x)^{1+\alpha_1}\}+1+Lr\abs x\\
&\le2^{\alpha_1}(4+L)(r\abs x)^{1+\alpha_1}.
\end{aligned}
\]
Thus, by \eqref{eq:nonlocal-choice-of-scales},
\[
\abs{w(x)}\le8\cdot2^{\alpha_1}(4+L)r^{\alpha_1}\abs x^{1+\alpha_1}\le\abs x^{1+\alpha_1}.
\]
\end{proof}

\subsection{Proof of \Cref{thm:nonlocal-conditional}}

Let $M\coloneqq\norm{u}_{L^\infty(\Rn)}$. If $M=0$, the estimate is immediate. Otherwise, $v\coloneqq u/M$ solves \eqref{eq:scalar-nonlocal} and satisfies $\abs v\le1$ in $\Rn$.

Starting with $R_0=1$ and $P_0=0$, apply \Cref{lem:nonlocal-approximation} iteratively to
\[
v_k(x)\coloneqq\frac{v(R_kx)-P_k(R_kx)}{8^{-k}R_k},
\qquad q_k\coloneqq8^kp_k,
\]
where $p_k=DP_k$. At each step, choose $r_k\in[\theta,\rho_*]$ and $\abs{b_k}\le L$ as in the lemma, and set
\[
R_{k+1}\coloneqq R_kr_k,
\qquad P_{k+1}(x)\coloneqq v(0)+(p_k+8^{-k}b_k)\cdot x.
\]
Rescaling and \Cref{lem:nonlocal-approximation} ensure that $v_{k+1}$ solves the shifted equation and satisfies $|v_{k+1}|\le\max\{1,\abs x^{1+\alpha_1}\}$ throughout $\Rn$, so the construction continues for every $k$. We have $\theta^k\le R_k\le\rho_*^k\to0$, $P_k(0)=v(0)$ for $k\ge1$, and
\begin{equation}\label{eq:nonlocal-iterated-flatness}
\norm{v-P_k}_{L^\infty(B_{R_k})}\le8^{-k}R_k,
\qquad p_{k+1}-p_k=8^{-k}b_k.
\end{equation}
Thus, $p_k\to p_\infty$, with
\begin{equation}\label{eq:nonlocal-slope-tail}
\abs{p_\infty-p_k}\le\frac{8L}{7}\,8^{-k},
\qquad \abs{p_\infty}\le\frac{8L}{7}.
\end{equation}

Choose $\alpha=\alpha(n,s)\in(0,\alpha_1)$ such that $1/8\le\theta^\alpha$. Then $8^{-k}\le R_k^\alpha$. For $0<\abs x<\theta$, choose $k\ge1$ with $R_{k+1} < \abs x \leq R_k$. Since $R_k\le\theta^{-1}\abs x$, \eqref{eq:nonlocal-iterated-flatness} and \eqref{eq:nonlocal-slope-tail} give
\[
\abs{v(x)-v(0)-p_\infty\cdot x}
\le\left(1+\frac{8L}{7}\right)8^{-k}R_k
\le C\abs x^{1+\alpha}.
\]
Consequently, $Dv(0)=p_\infty$ and $\abs{Dv(0)}\le8L/7$.

Apply these estimates to the globally bounded solutions
\[
z_{x_0}(y)\coloneqq\frac{v(x_0+y/4)-v(x_0)}2,
\quad y\in\Rn,\quad x_0\in B_{1/2}.
\]
Each $z_{x_0}$ solves \eqref{eq:scalar-nonlocal} in $B_1$ and satisfies $\abs{z_{x_0}}\le1$ in $\Rn$. Rescaling and the characterization of $C^{1,\alpha}$ by affine approximation yield
\[
\norm{Dv}_{L^\infty(B_{1/2})}+[Dv]_{C^{0,\alpha}(B_{1/2})}\le C(n,s).
\]
Multiplying by $M$ proves \eqref{eq:nonlocal-conditional-estimate}.
\qed

\bigskip

{\small \noindent{\bf Acknowledgments.} This publication is based upon work supported by King Abdullah University of Science and Technology (KAUST) under Award No. ORFS-CRG12-2024-6430. ET acknowledges partial support from the Grayce B. Kerr Chair funds at Oklahoma State University.}

\bigskip

{\small \noindent{\bf AI disclosure.} The mathematical ideas, statements, and proofs were developed entirely by the authors. AI tools were solely used to correct typographical errors and improve the language.}

\end{document}